\documentclass[11pt,reqno]{amsart}
\usepackage[left=2.8cm,right=2.8cm,top=2.8cm,bottom=2.8cm]{geometry}
\usepackage[utf8]{inputenc}
\usepackage[T1]{fontenc}
\usepackage[english]{babel}
\usepackage{eurosym}
\usepackage{amssymb}
\usepackage{amstext}
\usepackage{amsmath}
\usepackage{amsfonts}
\usepackage{amsthm}
\usepackage{bm}
\usepackage{color}
\usepackage{enumerate}
\usepackage{mathrsfs}
\usepackage{braket}
\usepackage{csquotes}
\usepackage{amscd,epsfig,esint,graphics}
\usepackage{wrapfig}
\usepackage{verbatim}
\usepackage{mathtools} 
\usepackage{bbm} 
\usepackage{hyperref}
\usepackage[backend=biber, giveninits = true]{biblatex}
\usepackage{svg}

\usepackage{siunitx}
\usepackage{tikz} 
\usepackage{graphicx} 
\usepackage{float} 
\usepackage{todonotes} 

\theoremstyle{plain}
\newtheorem{theorem}{Theorem}[section]
\newtheorem{lemma}[theorem]{Lemma}
\newtheorem{proposition}[theorem]{Proposition}

\newtheorem{remark}[theorem]{Remark}
\newtheorem{definition}[theorem]{Definition}

\theoremstyle{definition}

\theoremstyle{remark}

\numberwithin{equation}{section}

\newcommand{\R}{\mathbb{R}}
\newcommand{\C}{\mathbb{C}}
\newcommand{\K}{\mathbb{K}}

\newcommand{\N}{\mathbb{N}}
\newcommand{\s}{\mathbb{S}}

\newcommand{\M}{\mathcal{M}}
\renewcommand{\H}{\mathcal{H}}
\newcommand{\A}{\mathcal{A}}

\newcommand{\D}{\mathcal{D}}
\renewcommand{\AA}{\mathbb{A}}

\newcommand{\e}{\varepsilon}

\newcommand{\rank}{\textnormal{rank}}

\newcommand{\loc}{\textnormal{loc}}
\newcommand{\Lin}{\textnormal{Lin}}

\newcommand{\supp}{\textnormal{supp}}
\newcommand{\Span}{\textnormal{span}}
\newcommand{\st}{\ : \ }
\newcommand{\dist}{\textnormal{dist}}
\newcommand{\Ima}{\textnormal{Im }}

\newcommand\restr[2]{{
\left.\kern-\nulldelimiterspace 
#1 
\vphantom{\big|} 
\right|_{#2} 
}}

\newenvironment{samurev}{\color{magenta}}{\color{black}}
\newcommand{\bsamr}{\begin{samurev}}
\newcommand{\esamr}{\end{samurev}}

\newenvironment{todorev}{\color{cyan}}{\color{black}}
\newcommand{\btodo}{\begin{todorev}}
\newcommand{\etodo}{\end{todorev}}

\title{A regularity result for $BV^{\A}(\Omega)$}

\author[J. Deutsch]{Jakob Deutsch}
\address[J. Deutsch]{TU Wien, Wiedner Hauptstraße 8-10/104, 1040, Vienna, Austria.}
\email{jakob.deutsch@tuwien.ac.at}

\author[S. Ricc\`o]{Samuele Ricc\`o}
\address[S. Ricc\`o]{TU Wien, Wiedner Hauptstraße 8-10/104, 1040, Vienna, Austria.}
\email{samuele.ricco@tuwien.ac.at}

\bibliography{references.bib}
\begin{document}

\begin{abstract}
    It is well known that distributions whose symmetrized gradient is a bounded Radon measure belong to the space $BD$ on bounded domains with $C^1$ boundary. In this work, we extend this result to a broader class of first-order linear elliptic operators. More precisely, let $\A$ be a first-order linear elliptic operator satisfying the rank-one property. We prove that if a distribution defined on a Lipschitz domain has bounded $\A$-variation, then it belongs to the space $BV^\A$.   \\[0.5cm]
    {\it 2020 Mathematics Subject Classification:} 26B30, 35B65, 35F35, 49N99
    \\
    {\it Keywords and phrases:} fine properties, slicing, bounded $\A$-variation, elliptic regularity
\end{abstract}

\maketitle


\thispagestyle{empty}


\section{Introduction}
\label{sec:intro}

The study of regularity properties for functions and distributions subject to linear constant-rank differential constraints is a central topic in modern analysis, with important connections to the calculus of variations, geometric measure theory, and partial differential equations. In particular, spaces of functions with bounded variation adapted to elliptic differential operators provide a natural framework to investigate weak regularity phenomena beyond the classical Sobolev setting. 

The prototypical example is the space of functions of bounded deformation, $BD(\Omega)$, which corresponds to the case where the differential operator is the symmetrized gradient (cf.\ \cite{temam}). This concept has been extended in various directions by replacing the gradient with more general linear complex elliptic differential operators. More precisely, given a linear differential operator of the form
$$
    \A = \sum_{|\alpha| = k}A_\alpha \partial^\alpha
$$ 
with coefficients $A_\alpha \in {\rm Lin}(V;W)$ for some finite dimensional vector spaces $V$ and $W$, the space $BV^{\mathcal{A}}(\Omega)$ consists of all vector fields in $L^1$ whose $\mathcal{A}$-distributional derivative is a bounded Radon measure. These spaces have proven to be particularly useful in the study of variational problems with linear growth and in the analysis of fine properties of vector fields under differential constraints (see, e.g., \cite{DT84, GR19_0, ARS25}).

In this paper, we focus on first-order elliptic operators $\mathcal{A}$ acting between finite-dimensional vector spaces and satisfying the so-called \emph{rank-one property} introduced by \textsc{A. Arroyo-Rabasa} in \cite{AR20} (cf.\ Definition \ref{def:rank_one}). This structural condition plays an important role in the validity of one-dimensional slicing techniques which have been introduced to study the space of functions of bounded variation (cf.\ \cite{AFP00}). It is closely related to the algebraic mixing condition and can be seen as a natural generalization of properties enjoyed by classical operators like the gradient or the symmetrized gradient (cf.\ \cite{AR20, SVS19}).

Classical regularity results, such as those for functions with bounded deformation (cf.\ \cite[Chapter II, Theorem 2.3]{temam}), typically require the boundary of the domain to be $C^1$. In contrast, the goal of this work is to establish analogous regularity results under weaker geometric assumptions, namely for domains with Lipschitz boundary. In particular, we build on earlier results in the literature concerning trace operators and regularity in $BV^{\A}$ spaces (see \cite{BDG20}) and slicing techniques for the symmetrized gradient \cite{DM13, AR20}. Our main result reads as follows.

\begin{theorem}
\label{thm:DR}
    Let $\Omega \subset \R^n$ be a bounded domain with Lipschitz boundary and let $\A$ be a first-order elliptic operator such that it satisfies the rank-one property as in Definition \ref{def:rank_one}. If $u \in \D'(\Omega; V)$ and $\A u \in \M(\Omega; W)$ then $u \in BV^\A(\Omega; V)$.
\end{theorem}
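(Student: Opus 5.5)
The proof proceeds in two stages: first show $u \in L^1_{\loc}(\Omega;V)$, then upgrade to $u \in L^1(\Omega;V)$ using the Lipschitz geometry of $\partial\Omega$.

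\emph{Local regularity.} Fix $\omega \Subset \Omega$ and a cutoff $\varphi \in C_c^\infty(\Omega)$ with $\varphi \equiv 1$ on $\omega$. Then $\varphi u$ is a compactly supported distribution on $\R^n$ and
$$\A(\varphi u) = \varphi\,\A u + A(\nabla\varphi)\,u,$$
where $A(\xi)=\sum_j A_j\xi_j$ is the symbol of $\A$. Ellipticity gives a left inverse $B(\xi)$ of $A(\xi)$ for $\xi \neq 0$, homogeneous of degree $-1$. Hence $\varphi u = K * \A(\varphi u)$, up to a smooth error, where $K$ is a kernel homogeneous of degree $1-n$, so $K \in L^1_{\loc}$. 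The term $K*(\varphi\A u)$ lies in $L^1_{\loc}$, being an $L^1_{\loc}$ kernel convolved with a finite measure. The term $K*(A(\nabla\varphi)u)$ gains one derivative over $u$. Since $u$ is locally of some finite order $-m$, a bootstrap on nested cutoffs gains one order per step. After finitely many steps $u \in L^1_{\loc}$; in fact, Sobolev embedding gives $u \in L^{p}_{\loc}$ for every $p<n/(n-1)$.

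\emph{Global integrability via slicing.} This is where the rank-one property enters. Cover $\partial\Omega$ by finitely many cylinders in which $\Omega$ is the supergraph of a Lipschitz function, and fix one such chart. By the rank-one property (Definition \ref{def:rank_one}), there are finitely many directions $\xi_1,\dots,\xi_N$ and vectors $w_i \in W$, $v_i\in V$ such that for smooth $u$ the derivatives $\partial_{\xi_i}(u\cdot v_i)$ can be written as $w_i\cdot \A u$. Moreover, $\{v_i\}$ spans $V$. The directions $\xi_i$ can be chosen in an open cone transversal to the boundary. Density of such directions is what the rank-one property guarantees, as used in the slicing theory of \cite{AR20}. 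Consequently, for $u\in L^1_{\loc}$ with $\A u$ a finite measure, the one-dimensional slices $t\mapsto u(y+t\xi_i)\cdot v_i$ are $BV_{\loc}$ functions, with total variation controlled by $|\A u|$ on the line. Each line in a transversal direction enters $\Omega$ through the Lipschitz graph and meets a fixed interior compact set $\omega$ after bounded length.

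For a point $x$ near the boundary, write
$$u(x)\cdot v_i = u(z)\cdot v_i - \int_{[x,z]} d D_{\xi_i}(u\cdot v_i),$$
where $z\in\omega$ lies on the segment from $x$ in direction $\xi_i$. Integrating in $x$ and using Fubini for the slice measures gives
$$\int_{\Omega\cap U}|u\cdot v_i| \lesssim \|u\|_{L^1(\omega')}+ C\,|\A u|(\Omega),$$
with $C$ depending only on the cylinder height and Lipschitz constant. Summing over $i$ and over the charts, and using the spanning property, yields $u\in L^1(\Omega;V)$. Together with $\A u\in\M$, this gives $u \in BV^\A(\Omega;V)$.

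\emph{Main obstacle.} The rigorous slicing step is the difficult one: showing that for $u\in L^1_{\loc}$ only, a.e.\ slice is $BV$, and that $\int |D(u_y\cdot v_i)|\,dy \le C|\A u|$. I would establish this first for mollified $u_\e$ on slightly shrunken domains, where the pointwise identity holds. Then I would pass to the limit, using lower semicontinuity of the variation and $|\A u_\e|(\Omega_\e)\le|\A u|(\Omega)$. The uniform estimate above applied to $u_\e$ on $\Omega_\e$ (whose Lipschitz constants are uniform) then gives a bound in $L^1(\Omega)$ independent of $\e$, and Fatou's lemma concludes.
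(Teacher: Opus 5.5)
Your proposal is correct, but the interior step takes a genuinely different route from the paper's.

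\textbf{Interior step.} The paper obtains $u\in BV^{\A}_{\loc}$ (Theorem~\ref{thm:local_reg}) from the compactness Theorem~\ref{thm:compactness}, which itself relies on the rank-one property, slicing, polarization and the extension result of \cite{BDG20}. It combines this with a Poincar\'e-type inequality for the auxiliary seminorm $\rho$ and with mollification. You instead use the fundamental solution $K$, homogeneous of degree $1-n$ and built from a left inverse of $\AA(\xi)$, and write $\varphi u = K*(\varphi\,\A u)+K*(\AA(\nabla\varphi)u)$. This needs only $\R$-ellipticity and gives $u\in L^p_{\loc}$ for $p<n/(n-1)$ at once. Your bootstrap is in fact unnecessary: $K$ is smooth away from the origin, so $K*(\AA(\nabla\varphi)u)$ is $C^\infty$ on the interior of $\{\varphi=1\}$.

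\textbf{Boundary step.} Here the core idea matches the paper's. Both slice along directions of $\partial\sigma(\A)$ transversal to $\partial\Omega$, integrate the slice derivative from an interior region outwards, and bound it by $|\A u|(\Omega)$ via Proposition~\ref{prop:slicing_theory}. The implementations differ. The paper works at differentiability points with normal $e_n$ and uses the co-area formula. It controls the interior term through the trace on $\Sigma_{\alpha_0}$, which requires $u\in BV^{\A}(\widetilde\Omega)$ and the trace theorem. You work in Lipschitz charts with the uniform cone property and control the interior term by $\|u\|_{L^1(\omega')}$ via translation or averaging along the segments. Your version avoids traces and covers the non-differentiability points of $\partial\Omega$ directly. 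Your mollification treatment of slicing for $u\in L^1_{\loc}$ is fine. Alternatively, apply Proposition~\ref{prop:slicing_theory} on $\Omega'\subset\subset\Omega$ and exhaust, as the paper implicitly does on $F_{\alpha_0}\setminus F_\alpha$.

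\textbf{One attribution to fix.} You credit the rank-one property alone with the fact that an arbitrarily narrow open cone contains directions $\xi_i$ with $(\xi_i,v_i^*)\in\partial\sigma(\A)$ and $\{v_i^*\}$ spanning $V^*$. This also needs ellipticity, and it is exactly what the paper derives from Lemma~\ref{lem:directional_spectrum_surjectivity_projections} and the polarization Lemma~\ref{lem:polarization}:
\begin{enumerate}
\item pick $(\xi_0,v_0^*)\in\partial\sigma(\A)$ with $\xi_0$ in the cone;
\item complete $v_0^*$ to a basis by covectors $v_i^*$ with $(\zeta_i,v_i^*)\in\partial\sigma(\A)$;
\item take $(\xi_0+\lambda\zeta_i,\,v_0^*+\gamma_i\lambda v_i^*)$ for small $\lambda>0$.
\end{enumerate}
These directions lie in the cone, and the covectors still span because $\gamma_i\neq 0$.
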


A key ingredient in the proof is the interplay between the rank-one structure of the operator and the polarization provided by the slicing theory developed in \cite{AR20}. This allows us to reduce the analysis to one-dimensional sections, where classical $BV/BD$ techniques can be applied. 

The paper is organized as follows. In Section \ref{sec:prel}, we introduce the necessary notation and recall basic definitions concerning elliptic operators and $BV^{\mathcal{A}}$ spaces. Section \ref{sec:rank_one} is devoted to a recap of the rank-one property and its equivalent formulations. In Section \ref{sec:compactness}, we establish a compactness result that plays a fundamental role in the proof of the main theorem. We remark here that a similar result has been obtained by \cite{GR26}, but stronger tools for the proof were required. Finally, in Section \ref{sec:regularity}, we combine these tools to prove the desired regularity result.


\subsection{Notation}
\label{sec:notation}

\setlength{\parindent}{0pt} 

Given $V$, a finite-dimensional euclidean space of dimension $N$, and $\Omega \subset \R^n$ an open set, we denote with $\M(\Omega;V)$ the set of bounded $V$-valued Radon measures on $\Omega$ and with $\D'(\Omega; V)$ the set of $V$-valued distributions. We denote with $| \cdot | : \M(\Omega;V) \to [0,+\infty)$ the total variation of a measure. The notation $\langle \cdot , \cdot \rangle$ denotes both the duality pairing of $V$ with its dual space $V^*$, and the scalar product in $\mathbb{R}^n$. We indicate the norm of a vector in the space $V$, or its dual space $V^*$, by $| \, \cdot  \, |$. If $\tilde V \subset V$ is a subspace of $V$ we will write $\tilde V \leq V$, and we denote by $P_{\tilde V}$ the orthogonal projection on $\tilde V$. 


\section{Preliminaries}
\label{sec:prel}

Let us consider a homogeneous linear partial differential operator $\A$ of order $k \in \N$ on $\R^n$ with constant coefficients, namely
\begin{equation*}
    \A : \D'(\R^n; V) \to \D'(\R^n; W),
\end{equation*}
where $V$ and $W$ are finite-dimensional Euclidean spaces of dimensions $N$ and $M$, respectively. More rigorously, for a multi-index $\alpha \in (\N_0)^n$ with order $|\alpha| = \alpha_1 + \ldots + \alpha_n = k$, we consider differential operators of the form
\begin{equation}
\label{def:operator}
    \A u := \sum_{|\alpha| = k} A_\alpha \partial^\alpha u \qquad \textnormal{ for } u \in \D'(\R^n; V),
\end{equation}
where $\partial^\alpha$ represents the distributional derivative $\partial^{\alpha_1} \ldots \, \partial^{\alpha_n}$ and where the coefficients $A_\alpha \in W \otimes V^* \cong \Lin(V;W)$ are constant tensors. Given $\K \in \{\R, \C\}$, we call the {\it principal symbol} associated to the operator $\A$ the tensor-valued $k$-homogeneous polynomial $\AA : \K^n \to \Lin(V;W)$ defined as
\begin{equation*}
    \AA(\xi) := \sum_{|\alpha| = k} \xi^\alpha A_\alpha \qquad \textnormal{ for } \xi \in \K^n,
\end{equation*}
where $\xi^\alpha = \xi_1^{\alpha_1} \ldots \, \xi_n^{\alpha_n}$.

\begin{definition}
    Let $\K \in \{\R, \C\}$. Given an operator $\A$ as in \eqref{def:operator}, we say that $\A$ has constant rank if and only if there exists $r \in \N$ such that
    \begin{equation*}
        \rank \, \AA(\xi) = r \qquad \textnormal{ for every } \xi \in \K^n \setminus \{0\}.
    \end{equation*}
    Equivalently, $\A$ has constant rank if and only if $\dim(\Ima(\AA(\xi))) = r$ independently of $\xi \in \K^n \setminus\{0\}$.
\end{definition}

\begin{definition}
    Let $\K \in \{\R, \C\}$. Given an operator $\A$ as in \eqref{def:operator}, we say that $\A$ is $\K$-elliptic if and only if
    \begin{equation*}
        \ker(\AA(\xi)) = \{0\} \qquad \textnormal{ for every } \xi \in \K^n \setminus \{0\}.
    \end{equation*}
    Equivalently, $\A$ is $\K$-elliptic if and only if there exists a positive constant $C$ such that
    \begin{equation*}
        |\AA(\xi)v| \ge C |\xi|^k|v| \qquad \textnormal{ for every } \xi \in \K^n \textnormal{ and } v \in V.
    \end{equation*}
\end{definition}

The concept of complex-ellipticity has been introduced in \cite{S61,S70} and has been proven to be a necessary and sufficient condition for the existence of trace operators on $BV^\A$-spaces, see \eqref{BVA}, when $\A$ is a first-order operator. The next definition, first introduced in \cite{SVS19}, is about an algebraic mixing condition on the principal symbol of our operator.

\begin{definition}
    Let $\A$ be an $\R$-elliptic operator as in \eqref{def:operator}. We say that the principal symbol of $\A$ satisfies the algebraic mixing property \eqref{mixing} if and only if
    \begin{equation}
    \label{mixing}\tag{m}
        \bigcap_{\substack{\pi \, \leq\,  \R^n \\  \dim{\pi}\, =\, n-1}} \Span\{ \AA(\eta)v \st \eta \in \pi, \, v \in V \} = \{0\}.
    \end{equation}
\end{definition}

Let us now state the following lemma relating the ellipticity concept with the algebraic mixing property. For the second implication we refer to \cite[Remark 3.1]{AR20}.

\begin{lemma}
\label{lmm:ellipticities}
    Let $\A$ be an operator as in \eqref{def:operator}. Then, we have that
    \begin{equation*}
        \A \textnormal{ is } \C \textnormal{-elliptic } \quad \Rightarrow \quad \A \textnormal{ is } \R \textnormal{-elliptic}.
    \end{equation*}
    If, in particular, $\A$ is a first-order operator, then
    \begin{equation*}
        \A \textnormal{ is } \R \textnormal{-elliptic and satisfies \eqref{mixing}} \quad \Rightarrow \quad \A \textnormal{ is } \C \textnormal{-elliptic}.
    \end{equation*}
\end{lemma}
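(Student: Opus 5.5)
The first implication is immediate from the definitions, because $\R^n\setminus\{0\}\subset\C^n\setminus\{0\}$ and $\AA(\xi)$ for real $\xi$ is just the complexified symbol evaluated at a real point. Concretely, if $\ker\AA(\xi)=\{0\}$ for every nonzero $\xi\in\C^n$, this holds in particular for every nonzero $\xi\in\R^n$. Here the complexified operator acts on $V\otimes\C$, and a real vector in the kernel would also lie in the complex kernel. So $\C$-ellipticity implies $\R$-ellipticity.

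For the second implication, cited from \cite[Remark 3.1]{AR20}, I argue by contraposition. Assume $\A$ is first order and $\R$-elliptic but not $\C$-elliptic. Then there are $\xi=a+ib\in\C^n\setminus\{0\}$ with $a,b\in\R^n$, and $v=v_1+iv_2\neq 0$ in $V\otimes\C$, such that $\AA(\xi)v=0$. Since $\AA$ is linear in $\xi$ (first order), separating real and imaginary parts gives
\begin{equation*}
\AA(a)v_1-\AA(b)v_2=0,\qquad \AA(a)v_2+\AA(b)v_1=0 .
\end{equation*}
Next I use $\R$-ellipticity to show that $a$ and $b$ are linearly independent. If $b=\lambda a$, then $\AA(\xi)=(1+i\lambda)\AA(a)$ with $a\neq0$, and injectivity of $\AA(a)$ gives $v=0$, a contradiction; the case $a=0$ is handled in the same way. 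The same argument, using injectivity of $\AA(a)$ and $\AA(b)$, shows that $v_1$ and $v_2$ are both nonzero.

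Set $w:=\AA(a)v_1=\AA(b)v_2$, which is nonzero by ellipticity. The goal is to show that $w$ lies in the intersection in \eqref{mixing}, contradicting the mixing property. Let $\pi$ be any hyperplane with unit normal $\nu$. Since $\mathrm{span}\{a,b\}$ is two-dimensional, it meets $\pi$ in a line spanned by some $\eta=\alpha a+\beta b\neq0$. The task is then to write $w$ as an element of $\Span\{\AA(\eta')u : \eta'\in\pi,\ u\in V\}$. A natural candidate is
\begin{equation*}
\AA(\alpha a+\beta b)(\alpha v_1+\beta v_2) = \alpha^2 w + \beta^2\AA(b)v_2' + \dots,
\end{equation*}
where the cross terms are to be controlled using the two relations above. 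The cleanest way to organise this is to note that $\AA(a+ib)$ and $\AA(a-ib)$ both kill suitable complex vectors, which makes $t\mapsto\AA(\cos t\, a+\sin t\, b)(\cos t\, v_1+\sin t\, v_2)$ constant and equal to $w$. Indeed, differentiating in $t$ gives $\AA(b)v_1+\AA(a)v_2=0$ at $t=0$ by the second relation, and the second derivative can be checked in the same way. Choosing $t$ so that $\cos t\, a+\sin t\, b\in\pi$ then places $w$ in the span associated with $\pi$, for every hyperplane $\pi$.

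This verification is the main obstacle. In particular, it requires checking carefully that the $t$-independence follows from both real relations, expanding with the double-angle identities. Once that is done, the contradiction with \eqref{mixing} completes the proof.
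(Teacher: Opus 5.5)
Your proof is correct. It differs from the paper only in that the paper gives no argument: the first implication is treated as evident (as you do), and the second is delegated to \cite[Remark 3.1]{AR20}.

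Your contrapositive argument is self-contained and elementary. It also proves slightly more than needed: a nontrivial complex kernel element $(a+ib,\,v_1+iv_2)$ produces a nonzero $w$ that lies in $\bigcup_{\eta\in\pi}\Ima\AA(\eta)$ for every hyperplane $\pi$, not merely in the spans appearing in \eqref{mixing}.

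The step you call the main obstacle is not one, but you should tighten it. Drop the reasoning via derivatives at $t=0$, since a vanishing derivative at a single point says nothing about constancy. Also drop the stray ``natural candidate'' display with $v_2'$. Instead, simply expand bilinearly:
\[
\AA(\cos t\,a+\sin t\,b)(\cos t\,v_1+\sin t\,v_2)=\cos^2 t\,\AA(a)v_1+\sin t\cos t\,\bigl(\AA(a)v_2+\AA(b)v_1\bigr)+\sin^2 t\,\AA(b)v_2=w .
\]
This uses exactly your two real relations and nothing beyond $\cos^2 t+\sin^2 t=1$. It closes the proof once combined with two facts:
\begin{itemize}
\item $w=\AA(a)v_1\neq 0$, which follows from $a\neq 0$, $v_1\neq 0$ and $\R$-ellipticity, all of which you establish correctly;
\item the equation $\cos t\,\langle a,\nu\rangle+\sin t\,\langle b,\nu\rangle=0$ is solvable for any normal $\nu$ of $\pi$.
\end{itemize}

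The paper's citation buys brevity. Your version buys a transparent mechanism and makes the lemma independent of \cite{AR20}.
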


\begin{remark}
    Note that the converses of both implications in Lemma \ref{lmm:ellipticities} are not true, see respectively \cite[Example 2.2 (c)]{BDG20} and \cite[Examples 10.1 and 10.2]{AR20}.
\end{remark}

We now need to recall the definition of the function space $BV^\A$, introduced in \cite{GR19_0}. Let $\Omega \subset \R^n$ be open and let $\A$ be defined as in \eqref{def:operator}. 
Then, the space $BV^\A$ of functions of bounded $\A$-variation is defined as
\begin{equation}
\label{BVA}
    BV^\A(\Omega;V) := \{u \in L^1(\Omega;V) \st \A u \in \M(\Omega;W)\}.
\end{equation}
Note that this is a Banach space if endowed with the norm
\begin{equation*}
    \|u\|_{BV^\A(\Omega;V)} := \|u\|_{L^1(\Omega;V)} + |\A u|(\Omega).
\end{equation*}
If we consider $\A$ to be a $\C$-elliptic differential operator of order $k$, then the space $BV^\A$ is a generalization of the $\nabla^k B$ space introduced in \cite{DT84}. We recap below the notion of directional spectrum, which is essential when discussing the slicing technique we use. 
\begin{definition}\label{def:directional_spectrum}
    The {directional spectrum} $\partial \sigma(\A)$ is defined as the set of every $(\xi, v^*) \in \R^n \times V^*$ such that there exists $w^* \in W^*$ for which 
    \[
        \langle w^*, \A(\eta)v\rangle = \langle \xi, \eta \rangle \langle v^*, v \rangle
    \]
    holds for every $(\eta, v) \in \R^n \times V$.
\end{definition}

Now, we present the slicing theorem technique that we use for proving our result (cf.\ \cite[Proposition 2.1]{AR20}). We use the notation
\[
    \Omega_y^\xi := \{ t \in \R \ : \ y + t\xi \in \Omega \}
\]
for $\xi \in \s^{n-1}$ and $y \in \{\xi \}^\perp$.

\begin{proposition}
\label{prop:slicing_theory}
    Let $\A : C^\infty(\R^n; V) \to C^\infty(\R^n; W)$ be a first-order, homogeneous and elliptic differential operator satisfying the algebraic mixing property $(\textnormal{m}_1)$. Let $u \in BV^\A(\Omega;V)$ and suppose that there exists $(\xi, v^*) \in \partial \sigma(\A)$ with $w^* \in W^*$ such that 
     \[
        \langle w^*, \A(\eta)v\rangle = \langle \xi, \eta \rangle \langle v^*, v \rangle
    \]
    holds for every $(\eta, v) \in \R^n \times V$.
    Then, we have 
    \[
        u^{v^*}_{y,\xi} := (t \mapsto \langle v^*, u(y + t \xi)\rangle) \in BV(\Omega_y^\xi) 
    \]
    for $\H^{n-1}$-almost every $y \in \{\xi\}^\perp$ and 
    \[
        \int_{\{\xi\}^\perp} |D u^{v^*}_{y,\xi}|(\Omega_y^\xi) \, dy \leq |\A u|(\Omega)|w^*|.
    \]
\end{proposition}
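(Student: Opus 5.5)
This is the standard one-dimensional slicing argument in the spirit of \cite[Proposition 2.1]{AR20}, which in turn follows the classical proof for $BV$ and $BD$ (cf.\ \cite{AFP00}).

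\emph{Approximation.} First I reduce to smooth $u$. By Anzellotti--Giaquinta type approximation in $BV^\A$ (mollification with a partition of unity subordinate to a cover of $\Omega$), there exist $u_j \in C^\infty(\Omega;V)\cap BV^\A(\Omega;V)$ with
\[
u_j \to u \ \text{in } L^1(\Omega;V), \qquad |\A u_j|(\Omega) \to |\A u|(\Omega).
\]
This approximation uses only the linearity and constant coefficients of $\A$. For such $u_j$ the slices $t\mapsto \langle v^*, u_j(y+t\xi)\rangle$ are smooth, and the chain rule gives
\[
\frac{d}{dt}\langle v^*, u_j(y+t\xi)\rangle = \langle v^*, \partial_\xi u_j\rangle .
\]

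\emph{Identification of the slice derivative.} Next I express this derivative through $\A u_j$, using the spectral identity. Write $\A u = \sum_i A_i \partial_i u$, so that $\AA(\eta)=\sum_i \eta_i A_i$. The identity $\langle w^*, \AA(\eta)v\rangle = \langle\xi,\eta\rangle\langle v^*,v\rangle$ with $\eta=e_i$ gives $\langle w^*, A_i v\rangle = \xi_i \langle v^*, v\rangle$. Summing over $i$,
\[
\langle w^*, \A u_j\rangle = \sum_i \xi_i \langle v^*, \partial_i u_j\rangle = \langle v^*, \partial_\xi u_j\rangle .
\]
Hence the slice derivative equals $\langle w^*, \A u_j\rangle(y+t\xi)$, and $|\langle w^*, \A u_j\rangle| \le |w^*|\,|\A u_j|$. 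By Fubini over $y\in\{\xi\}^\perp$, $t\in\Omega^\xi_y$,
\[
\int_{\{\xi\}^\perp} \int_{\Omega_y^\xi} \bigl|(u_j)^{v^*}_{y,\xi}{}'\bigr|\,dt\,dy \le |w^*|\,|\A u_j|(\Omega).
\]

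\emph{Passage to the limit.} This is the delicate step. By Fubini and $L^1$ convergence, after passing to a subsequence, $(u_j)^{v^*}_{y,\xi}\to u^{v^*}_{y,\xi}$ in $L^1(\Omega^\xi_y)$ for a.e.\ $y$. One-dimensional lower semicontinuity of the total variation then gives
\[
|Du^{v^*}_{y,\xi}|(\Omega^\xi_y)\le \liminf_j \bigl|D(u_j)^{v^*}_{y,\xi}\bigr|(\Omega^\xi_y).
\]
Fatou's lemma in $y$ yields
\[
\int_{\{\xi\}^\perp} |Du^{v^*}_{y,\xi}|(\Omega_y^\xi)\,dy \le |w^*|\liminf_j |\A u_j|(\Omega) = |w^*|\,|\A u|(\Omega).
\]
In particular the integrand is finite for a.e.\ $y$, so the slices lie in $BV(\Omega^\xi_y)$; they lie in $L^1$ already since $u\in L^1$ and Fubini applies.

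Alternatively, one can avoid approximation by testing directly: for $\varphi\in C_c^1(\Omega)$, the same identity gives $\int \langle v^*,u\rangle\,\partial_\xi\varphi\,dx = -\int \varphi\, d\langle w^*,\A u\rangle$. Disintegrating the measure along lines would then give the estimate, but the approximation route seems cleaner.
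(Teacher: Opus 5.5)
Your argument is correct. Note that the paper gives no proof of this proposition; it is imported from \cite[Proposition 2.1]{AR20}. The standard way to obtain it is essentially the alternative you mention at the end. Applying the spectral identity with $\eta = e_i$ gives $\partial_\xi \langle v^*, u\rangle = \langle w^*, \A u\rangle$ in $\D'(\Omega)$, so the scalar function $\langle v^*, u\rangle \in L^1(\Omega)$ has a directional derivative that is a finite measure. The classical one-directional slicing theorem for $BV$ (cf.\ \cite{AFP00}) then yields at once three things: almost every slice lies in $BV(\Omega^\xi_y)$, $\langle w^*, \A u\rangle$ disintegrates into the slice derivatives, and the \emph{equality} $\int_{\{\xi\}^\perp} |Du^{v^*}_{y,\xi}|(\Omega^\xi_y)\,dy = |\langle w^*, \A u\rangle|(\Omega) \le |w^*|\,|\A u|(\Omega)$ holds.

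Your route instead reproves the needed half of that theorem directly, via strict smooth approximation in $BV^\A$, one-dimensional lower semicontinuity of the variation, and Fatou's lemma. This is self-contained and delivers exactly the inequality the paper uses later (in the compactness proof and the boundary estimate). The price is that it gives neither the disintegration nor the reverse inequality; the latter would follow by testing against $\varphi \in C^1_c(\Omega)$ with $|\varphi| \le 1$, as in your last paragraph.

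Two small remarks. First, your Fubini step uses that $(y,t) \mapsto y + t\xi$ has Jacobian $1$, i.e.\ $|\xi| = 1$. This is implicit in the notation $\Omega^\xi_y$; for general $\xi$ the bound acquires a factor $|\xi|^{-1}$. Second, as your proof makes clear, neither ellipticity nor the mixing condition is needed here. Only the spectral identity and the first-order, constant-coefficient structure of $\A$ enter.
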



\section{Rank-one condition}
\label{sec:rank_one}

In this section, we give a short recap and introduction of the rank-one condition introduced by \textsc{Arroyo-Rabasa} in \cite{AR20}. Let $\A$ be a first-order linear partial differential operator as in \eqref{def:operator}, namely $\A : \D'(\R^n; V) \to \D'(\R^n; W)$ defined by
\begin{equation}
\label{def:oper_first_ord}
    \A := \sum_{i = 1}^n A_i \partial_i
\end{equation}
where $A_i : V \to W$ for every $i = 1, \ldots, n$ are linear maps between the two euclidean spaces $V$ and $W$. We define the bilinear map
\[  
    f_\A(\xi, v) := \AA(\xi) v \quad \textnormal{ for } \xi \in \R^n \textnormal{ and } v \in V,
\]
and denote by $\overline{f_\A}$ its extension to $\R^n \otimes V$. Furthermore, let
\begin{equation}
\label{def:pullback_directional_spectrum}
    g_\A : 
    \begin{dcases}
        W^* \to (\R^n \otimes V)^* \cong \R^n \otimes V^*, \\
        w^* \mapsto w^* \circ \overline{f_\A}.
    \end{dcases}
\end{equation}
We call $w^* \in W^*$ a {\it rank-one vector} if $g_\A(w^*)$ corresponds to a rank-one tensor or, more precisely, if there exists $\xi \in \R^n$ and $v^* \in V^*$ such that the relation
\[
    \langle g_\A(w^*), \eta \otimes v \rangle = \langle \xi, \eta \rangle \langle v^*,v \rangle
\]
holds for every $0 \ne \eta \in \R^n$ and $0 \ne v \in V$. We denote the set of rank-one vectors by $\A_1^\otimes$. Moreover, we define 
\[
    W_\A := \Span \left( \bigcup_{\xi \in \s^{n-1}} \Ima \AA(\xi) \right).
\]

\begin{definition}
\label{def:rank_one}
    Let $\A$ be defined as in \eqref{def:oper_first_ord}. We say that $\A$ fulfills the rank-one property, or equivalently that $\A$ is rank-one, if
    \[
        \Span \, \A_1^\otimes = (W_\A)^*.
    \]
\end{definition}

If $\A$ is rank-one we can freely pass between the notions of rank-one vectors and directional spectrum since 
\[
    \partial \sigma(\A) = g_\A(\A_1^\otimes), 
\]    
where $\partial \sigma(\A)$, namely the directional spectrum of $\A$, is defined in Definition \ref{def:directional_spectrum}.  \\[0.5em]
The following two statements are direct consequences of the theory recalled so far.
\begin{lemma}
\label{lem:injectivity_of_pullback_and_2d_property}
    Let $\A$ be defined as in \eqref{def:oper_first_ord}. We have the following:
    \begin{enumerate}
        \item[(i)] The map $g_\A$ defined in \eqref{def:pullback_directional_spectrum} is injective on $(W_\A)^*$.
        
        \item[(ii)] Suppose that $\A$ fulfills the rank-one property and consider the two-dimensional subspaces $\Pi \subset \R^n$, $\widetilde V \subset V$. Then, the restricted differential operator $\widetilde \A$, induced by the symbol $\AA(\xi)v$ with $\xi \in \Pi$ and $v \in \widetilde V$, has the rank-one property.
    \end{enumerate}
\end{lemma}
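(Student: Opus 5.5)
Part (i) is pure linear algebra. Suppose $w^* \in (W_\A)^*$ and $g_\A(w^*) = 0$, so that $\langle w^*, \overline{f_\A}(\eta\otimes v)\rangle = 0$ for every $\eta \otimes v$. By bilinearity this means $\langle w^*, \AA(\eta) v\rangle = 0$ for all $\eta \in \R^n$ and $v \in V$. Hence $w^*$ vanishes on $\Ima \AA(\eta)$ for every $\eta$. By homogeneity this includes all $\eta \in \s^{n-1}$, so $w^*$ vanishes on the span $W_\A$. Since we regard $w^*$ as an element of $(W_\A)^*$, identified for instance with linear forms on $W_\A$ via the Euclidean structure, this forces $w^* = 0$. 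Linearity of $g_\A$ then gives injectivity.

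For part (ii), I would set up the restricted operator explicitly. Fix bases so that $\Pi \cong \R^2$ and $\widetilde V \leq V$. Define $\widetilde\AA(\xi) v := \AA(\xi) v$ for $\xi \in \Pi$ and $v \in \widetilde V$, with target space
\[
W_{\widetilde\A} = \Span\{\AA(\xi)v \st \xi\in\Pi,\ v\in\widetilde V\} \leq W_\A .
\]
Then $g_{\widetilde\A}(w^*)$ is the restriction of the form $g_\A(w^*)$ to $\Pi\otimes\widetilde V$, that is, $g_{\widetilde\A}(w^*) = (P_\Pi\otimes P_{\widetilde V})\, g_\A(w^*)$ after identifying $(\R^n\otimes V)^* \cong \R^n\otimes V^*$. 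The key observation is that restricting a rank-one tensor $\xi\otimes v^*$ gives $P_\Pi\xi \otimes P_{\widetilde V}v^*$, which is either rank-one or zero. So every $w^* \in \A_1^\otimes$, viewed through restriction to $W_{\widetilde\A}$, lies in $\widetilde\A_1^\otimes \cup \{0\}$. Strictly, a vector with zero pullback is a rank-one vector for $\widetilde \A$ only in a degenerate sense, but it contributes nothing to the span, so this causes no harm.

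Next, the restriction map $(W_\A)^* \to (W_{\widetilde\A})^*$, $w^*\mapsto w^*|_{W_{\widetilde\A}}$, is surjective because $W_{\widetilde\A} \leq W_\A$. Applying it to $\Span\A_1^\otimes = (W_\A)^*$ gives
\[
(W_{\widetilde\A})^* = \Span\{w^*|_{W_{\widetilde\A}} \st w^*\in\A_1^\otimes\}.
\]
By the observation above, the right-hand side is contained in $\Span \widetilde\A_1^\otimes$. The reverse inclusion $\Span\widetilde\A_1^\otimes \subseteq (W_{\widetilde\A})^*$ is trivial, so $\widetilde\A$ is rank-one.

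The main obstacle is consistency of identifications. One must check that restricting $w^*$ to $W_{\widetilde\A}$ and then pulling back via $g_{\widetilde\A}$ equals restricting $g_\A(w^*)$ to $\Pi\otimes\widetilde V$. This amounts to the identity $\langle w^*,\AA(\eta)v\rangle = \langle\xi,\eta\rangle\langle v^*,v\rangle$ for $\eta\in\Pi$ and $v\in\widetilde V$, which holds trivially. One must also handle the case where the restricted rank-one tensor vanishes, which is exactly the case treated above. Two-dimensionality plays no real role here; the same argument works for arbitrary subspaces $\Pi$ and $\widetilde V$.
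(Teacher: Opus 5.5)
Your proposal is correct and follows essentially the same route as the paper. For (i) you use the kernel, whereas the paper argues that distinct functionals on $W_\A$ must differ on some spanning vector $\AA(\xi)v$; these are the same argument. For (ii), your surjectivity of the restriction map $(W_\A)^* \to (W_{\widetilde\A})^*$ is exactly the paper's step of viewing $w^* \in (W_{\widetilde\A})^*$ inside $(W_\A)^*$, decomposing it into rank-one vectors, and projecting back, where each restricted tensor $\mathcal{P}_\Pi\eta_i \otimes \mathcal{P}_{\widetilde V^*}v_i^*$ is still rank-one (or zero). Your remark that two-dimensionality plays no role is also accurate for the paper's argument.
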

\begin{proof}
    {\it (i)}: Indeed, let $w^*, \bar w^* \in (W_\A)^*$ with $w^* \ne \bar w^*$. This means that there exist $\xi \in \R^n$ and $v \in V$ such that 
    \[
        \langle w^* \circ f_\A, \xi \otimes v \rangle = \langle w^*, \AA(\xi)v \rangle \ne \langle \bar w^*, \AA(\xi)v \rangle = \langle \bar w^* \circ f_\A, \xi \otimes v \rangle.
    \]\ \\[-1em]
    {\it (ii)}: Let $w^* \in (W_{\widetilde \A})^* \subset (W_\A)^*$. By the rank-one property, $w^*$ can be written as a linear combination  of $w_i^* \in \A_1^\otimes$ with corresponding $\eta_i \in \R^n$ and $v_i^* \in V^*$. But for any $\xi \in \Pi$ and $v \in \widetilde V$ 
    \[
        \langle \mathcal{P}_{(W_{\widetilde \A})^*} w_i^*, \AA(\xi)v \rangle = \langle w_i^*, \AA(\xi)v \rangle = \langle \eta_i, \xi\rangle  \langle v_i^*, v \rangle = \langle \mathcal{P}_\Pi \eta_i, \xi \rangle \langle \mathcal{P}_{\widetilde V^*} v_i^*, v \rangle,
    \]
    where, given $E$ and $F$ finite-dimensional vector spaces, $\mathcal{P}_{E^*} f^* := f^* \circ \iota \in E^*$ for $f^* \in F^*$ with $\iota$ being the inclusion of $E$ in $F$. 
    In particular, $w^* = \mathcal{P}_{(W_{\widetilde \A})^*} w^*$ is a linear combination of $\mathcal{P}_{(W_{\widetilde \A})^*} w_i^*$ which are rank-one, which in turn implies $(W_{\widetilde \A})^* \subset \Span \, \widetilde \A_1^\otimes$.
\end{proof}

It has been shown in \cite[Lemma 4.1]{AR20} that we have the following characterization of the rank-one condition. We recall the proof for convenience.

\begin{lemma}
    Let $\A$ be defined as in \eqref{def:oper_first_ord}. Then the following are equivalent: 
    \begin{enumerate}
        \item[(i)] $\A$ satisfies the rank-one property,
        \item[(ii)] $\A$ satisfies the mixing condition $(\ref{mixing})$.
    \end{enumerate}
\end{lemma}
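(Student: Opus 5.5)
The plan is to prove both implications by duality, rewriting each condition as a statement about annihilators inside $W^*$. I will identify $\R^n \otimes V^*$ with $\Lin(V;\R^n)$-type tensors via $g_\A$ and use Lemma \ref{lem:injectivity_of_pullback_and_2d_property}(i), which identifies $(W_\A)^*$ with its image under $g_\A$.

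\textbf{Reformulating the mixing condition.} For a hyperplane $\pi = \{\xi\}^\perp$, set $W_\pi := \Span\{\AA(\eta)v : \eta \in \pi,\ v\in V\}$. Its annihilator in $(W_\A)^*$ consists of those $w^*$ with $\langle w^*,\AA(\eta)v\rangle = 0$ for all $\eta\perp\xi$ and all $v$. In terms of $g_\A$, this says that the tensor $g_\A(w^*)\in\R^n\otimes V^*$ vanishes on $\xi^\perp\otimes V$. Equivalently, $g_\A(w^*) = \xi\otimes v^*$ for some $v^*\in V^*$, so $w^*$ is a rank-one vector with direction $\xi$. Conversely, every rank-one $w^*$ with $g_\A(w^*)=\xi\otimes v^*$ and $\xi\neq0$ annihilates $W_{\xi^\perp}$. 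If $\xi = 0$ or $v^*=0$, then $g_\A(w^*)=0$, and by (i) $w^*$ vanishes on $W_\A$, so it is irrelevant. Hence
\[
\A_1^\otimes \cap (W_\A)^* \;=\; \bigcup_{\xi\in\s^{n-1}} \big(W_{\xi^\perp}\big)^{\perp},
\]
where the annihilators are taken in $(W_\A)^*$.

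\textbf{Concluding by duality.} For subspaces $U_\pi \le W_\A$, the identity
\[
\Big(\bigcap_\pi U_\pi\Big)^\perp = \Span \bigcup_\pi U_\pi^\perp
\]
holds in finite dimensions. It follows from $(A\cap B)^\perp = A^\perp + B^\perp$ and the fact that the intersection of the family is already attained by finitely many members. Applying it with $U_\pi = W_\pi$, we get that $\Span\A_1^\otimes = (W_\A)^*$ holds if and only if $\bigcap_\pi W_\pi = \{0\}$, which is exactly \eqref{mixing}. One small point must be handled: \eqref{mixing} is stated in $W$, but all the $W_\pi$ lie in $W_\A$, so the intersection can be computed inside $W_\A$.

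\textbf{Main obstacle.} The delicate step is the middle characterisation, specifically the claim that a tensor $T\in\R^n\otimes V^*$ vanishing on $\xi^\perp\otimes V$ must have the form $\xi\otimes v^*$. I will verify it by choosing an orthonormal basis $\xi, e_2,\dots,e_n$ and writing $T=\xi\otimes a+\sum_{i\ge2} e_i\otimes b_i$. Testing $T$ on $e_j\otimes v$ forces $b_j=0$ for every $j\ge 2$, leaving $T = \xi\otimes a$.

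A second point needs care: the restriction to $(W_\A)^*$. Rank-one vectors are elements of $W^*$, so I will interpret the rank-one property modulo the annihilator of $W_\A$, as the definition implicitly does, by restricting functionals to $W_\A$. Injectivity from Lemma \ref{lem:injectivity_of_pullback_and_2d_property}(i) makes this restriction harmless.
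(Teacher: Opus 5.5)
Your proposal is correct and follows essentially the paper's route: the annihilator of each $W_{\xi^\perp}$ consists exactly of the rank-one vectors with direction $\xi$, and the equivalence then follows from the finite-dimensional identity $\bigl(\bigcap_\pi W_\pi\bigr)^\perp = \Span \bigcup_\pi W_\pi^\perp$. The only difference is presentation: you combine both implications into a single set identity and justify two points the paper uses implicitly, namely that finitely many hyperplanes already realise the intersection, and how to pass between $W^*$ and $(W_\A)^*$.
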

\begin{proof}
    Let us begin by proving ${\it (i)} \implies {\it (ii)}$. Suppose that 
    \begin{equation}
    \label{eq:mixing_intersection}
        w \in \bigcap_{\substack{\pi \, \leq\,  \R^n \\  \dim{\pi}\, =\, n-1}} \Span \{ \AA(\eta)v \st \eta \in \pi, \, v \in V \}.
    \end{equation}
    Let $w^* \in \A_1^\otimes$, and let $\eta \in \s^{n-1}$ and $v^* \in V^*$ be such that
    \[
        g_\A(w^*) = \eta \otimes v^*.
    \]
    By \eqref{eq:mixing_intersection} we then find $\tilde \xi \in \{\eta\}^\perp \cap \s^{n-1}$ and $\tilde v \in V$ such that $w = \AA(\tilde \xi) \tilde v$. Now, we can infer
    \[
        \langle w^*, w \rangle = \langle \tilde \xi, \eta\rangle \langle v^*, v \rangle = 0.
    \]
    Since $\Span\, \A_1^\otimes = (W_\A)^*$ holds by the rank-one condition, it follows that $\langle w^*, w \rangle = 0$ for every $w^* \in (W_A)^*$ which, in particular, implies $w = 0$.
    \\
    In order to prove that ${\it (ii)} \implies {\it (i)}$, let us consider $w^* \in \bigcap_{\eta \in \pi} \ker(\AA(\eta)^*)$ for $\pi \leq \R^n$ with $\dim(\pi) = n - 1$. This implies
    \[
        \restr{(w^* \circ \overline{f_\A})}{\pi \otimes V} = 0.
    \]
    In particular, we have $g_\A(w^*) \in \pi^\perp \otimes V^*$ which implies that $w^*$ is a rank-one vector. Now, the mixing condition directly implies
    \begin{equation*}
    \begin{aligned}
        (W_\A)^*
        &= (\{0\})^\perp \\
        &= \Bigg(\bigcap_{\substack{\pi \, \leq\,  \R^n \\  \dim{\pi}\, =\, n-1}} \Span\{ \AA(\eta)v \st \eta \in \pi, \, v \in V \} \Bigg)^\perp \\
        &= \underset{\substack{\pi \, \leq\,  \R^n \\  \dim{\pi}\, =\, n-1}}{\Span} \left( \bigcup_{\eta \in \pi} \Ima \AA(\eta) \right)^\perp \\
        &= \underset{\substack{\pi \, \leq\,  \R^n \\  \dim{\pi}\, =\, n-1}}{\Span} \left( \bigcap_{\eta \in \pi} \ker \left( \AA(\eta)^* \right) \right),
    \end{aligned}
    \end{equation*}
    with which we conclude.
\end{proof}

Now, we state the polarization \cite[Proposition 4.4]{AR20} property of first-order operators that are elliptic and satisfy the rank-one property, which is essential in proving the compactness and regularity theorems in Section \ref{sec:compactness} and Section \ref{sec:regularity}. We provide a short proof for the convenience of the reader.

\begin{lemma}\label{lem:polarization}
    Let $\A$, defined as in \eqref{def:oper_first_ord}, be elliptic and satisfy the rank-one property. Suppose that $(\xi, e), (\eta, f) \in \partial \sigma(\A)$. Then, there exists $\gamma \in \R$ with $\gamma \neq 0$ such that 
    \[
        (\xi + \lambda \eta, e + \gamma \lambda f) \in \partial \sigma (\A)
    \]
    for every $\lambda \in \R$.
\end{lemma}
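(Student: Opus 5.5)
Let $w_1^*, w_2^* \in \A_1^\otimes$ be the rank-one vectors with $g_\A(w_1^*) = \xi \otimes e$ and $g_\A(w_2^*) = \eta \otimes f$. Since $\partial \sigma(\A) = g_\A(\A_1^\otimes)$ and $g_\A$ is linear, the goal reduces to finding $\gamma \ne 0$ such that, for every $\lambda$, the tensor $(\xi+\lambda\eta)\otimes(e+\gamma\lambda f)$ lies in the image $g_\A(W^*)$. Expanding,
\[
(\xi+\lambda\eta)\otimes(e+\gamma\lambda f) = \xi\otimes e + \gamma\lambda^2\, \eta\otimes f + \lambda\,(\gamma\, \xi\otimes f + \eta\otimes e).
\]
The first two terms already belong to $g_\A(W^*)$, so it suffices to show that the mixed term $\gamma\, \xi\otimes f + \eta\otimes e$ belongs to $g_\A(W^*)$ for a suitable $\gamma \neq 0$. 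The case where $\xi,\eta$ are parallel, or $e,f$ are parallel, is handled separately: then all tensors involved lie in a one-dimensional family and any $\gamma$ of the right sign works. For instance, if $\eta = c\xi$, take $\gamma$ such that the combination factors.

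For the main case, I would reduce to two dimensions via Lemma \ref{lem:injectivity_of_pullback_and_2d_property}(ii). Set $\Pi = \Span\{\xi,\eta\}$ and let $\widetilde V \le V$ be a two-dimensional subspace on which $(e,f)$ restricts to a dual basis. The restricted operator $\widetilde\A$ is still elliptic and rank-one, and it suffices to work with its symbol, which is a bilinear map $\R^2\times\R^2\to W$. In coordinates with $\xi = \epsilon_1$, $\eta = \epsilon_2$, $e = \epsilon_1^*$, $f = \epsilon_2^*$, the image $g_{\widetilde\A}(W^*)$ is a subspace $L$ of $2\times 2$ matrices containing $E_{11}$ and $E_{22}$. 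Ellipticity forbids $\AA(\zeta)v=0$ for $\zeta, v \ne 0$. By duality, this means the rank-one matrices $\zeta\otimes v$ are not all annihilated by $L$, i.e. no rank-one matrix lies in $L^\perp$. Since $L \ni E_{11}, E_{22}$, the orthogonal complement $L^\perp$ is contained in $\Span\{E_{12},E_{21}\}$.

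If $L^\perp = 0$, then $L$ is all $2\times 2$ matrices and any $\gamma$ works. If $L^\perp$ is two-dimensional, it contains the rank-one matrix $E_{12}$, contradicting ellipticity. So, in the remaining case, $L^\perp$ is spanned by $aE_{12}+bE_{21}$ with $ab\neq 0$ (since $E_{12}$ and $E_{21}$ themselves are rank-one), and ellipticity further forces $\det(aE_{12}+bE_{21}) = -ab \ne 0$. The condition that $\gamma E_{12} + E_{21}$ lies in $L$ reads $\gamma a + b = 0$, so $\gamma = -b/a \ne 0$. This $\gamma$ depends only on the pairs $(\xi,e)$ and $(\eta,f)$, not on $\lambda$.

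Finally, I would lift back to the original operator. A functional in $(W_{\widetilde\A})^*$ realizing the rank-one tensor must be extended to $W^*$ so that its full pullback stays rank-one, namely equal to $(\xi+\lambda\eta)\otimes(e+\gamma\lambda f)$ on all of $\R^n\otimes V$. To avoid this difficulty, I would instead work directly in $\R^n\otimes V^*$. The tensor $w^* := w_1^*+\lambda^2\gamma w_2^* + \lambda z^*$, where $z^*$ is to be chosen, must have a pullback equal to the prescribed tensor globally. I expect this lifting to be the main obstacle: the $2\times 2$ argument only controls the pairing on $\Pi\otimes\widetilde V$. To handle it, I would apply the rank-one property to write the mixed term as a combination of rank-one vectors and use the injectivity in Lemma \ref{lem:injectivity_of_pullback_and_2d_property}(i) to identify the full tensor. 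If this fails, the fallback is to choose $\widetilde V$ and $\Pi$ adapted so that the remaining components vanish.
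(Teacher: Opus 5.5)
Your $2\times 2$ computation is correct. It is the paper's argument seen from the dual side. The paper splits into $\dim g_\A((W_\A)^*)\in\{3,4\}$, writes the third generator as $\alpha\,\xi\otimes f+\beta\,\eta\otimes e$ and sets $\gamma=\alpha/\beta$. You instead locate the annihilator $L^\perp\subset\Span\{E_{12},E_{21}\}$ and use ellipticity to force $ab\neq 0$, which justifies $\gamma\neq0$ where the paper does not.

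The genuine gap is the step you flagged yourself: passing from $\widetilde\A$ back to $\A$. Since $g_\A(W^*)=(\ker\overline{f_\A})^\perp$, you need
\[
\gamma\,\langle \xi\otimes f,k\rangle+\langle \eta\otimes e,k\rangle=0\qquad\text{for every } k\in\ker\overline{f_\A}\subset\R^n\otimes V,
\]
but the $2\times2$ analysis only tests $k\in\ker\overline{f_\A}\cap(\Pi\otimes\widetilde V)$. Neither of your remedies reaches the rest of the kernel:
the rank-one property says $g_\A(W^*)$ is spanned by rank-one tensors, not that your mixed tensor lies in $g_\A(W^*)$;
injectivity of $g_\A$ gives uniqueness of $w^*$, not existence;
$\Pi=\Span\{\xi,\eta\}$ is forced, and taking $\widetilde V=V$ does not help.

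In fact the gap cannot be closed, because the statement is false. Take $n=3$, $V=\R^2$, $W=\R^5$ and
\[
\A u=\bigl(\partial_1u_1,\ \partial_2u_1,\ \partial_2u_2,\ \partial_3u_2,\ \partial_1u_2-\partial_3u_1\bigr).
\]
From $\AA(\zeta)v=(\zeta_1v_1,\zeta_2v_1,\zeta_2v_2,\zeta_3v_2,\zeta_1v_2-\zeta_3v_1)$ one checks directly:
$\A$ is elliptic (even $\C$-elliptic), and $W_\A=W$;
$W^*$ is spanned by the rank-one vectors $\epsilon_1^*,\dots,\epsilon_4^*$ and $\epsilon_1^*-\epsilon_4^*+\epsilon_5^*$ (coordinate functionals of $\R^5$; the last pulls back to $(e_1-e_3)\otimes(1,1)$), so $\A$ is rank-one.

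The pairs $(\xi,e)=(e_1,(1,0))$ and $(\eta,f)=(e_2,(0,1))$ lie in $\partial\sigma(\A)$, realized by $\epsilon_1^*$ and $\epsilon_3^*$. But a $w^*=(w_1,\dots,w_5)$ with $\langle w^*,\AA(\zeta)v\rangle=(\zeta_1+\lambda\zeta_2)(v_1+\gamma\lambda v_2)$ must satisfy $w_5=\gamma\lambda$ (coefficient of $\zeta_1v_2$) and $w_5=0$ (coefficient of $\zeta_3v_1$). So for $\lambda\neq0$ only $\gamma=0$ is possible. Yet the restriction to $\Pi=\Span\{e_1,e_2\}$, $\widetilde V=V$, is the full gradient on $\R^2$. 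Your planar argument therefore falls into the case $L^\perp=\{0\}$ and returns every $\gamma$.

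Allowing $\gamma=0$ does not rescue the statement. Adding a second constraint of the same type in $\R^4$, i.e. $\A u=(\partial_1u_1,\partial_2u_2,\partial_3u_2,\partial_4u_1,\partial_1u_2-\partial_3u_1,\partial_2u_1-\partial_4u_2)$, keeps $\A$ elliptic and rank-one. The same coefficient comparison then forces $\lambda=0$, so no $\gamma$ works at all.

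The paper's proof has exactly the same hole. Lemma~\ref{lem:injectivity_of_pullback_and_2d_property}(ii) only transfers the rank-one property to $\widetilde\A$. It does not say that elements of $\partial\sigma(\widetilde\A)$ extend to elements of $\partial\sigma(\A)$. So both arguments are valid only when $n=\dim V=2$.
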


\begin{proof}
    Let us assume that $\xi$ and $\eta$ are linearly independent in $\R^n$ and $e$ and $f$ are linearly independent in $V^*$; otherwise, the statement is trivial as you can choose any $\gamma \neq 0$. By Lemma \ref{lem:injectivity_of_pullback_and_2d_property}{\it (ii)}, it is enough to consider the restriction of $\A$ to $\Span\{ \xi, \eta\}$ and $\Span\{ e,f \}$ as the restricted differential operator still fulfills the rank-one condition. In particular, we can assume $n = 2$ and $\dim V = 2$. Notice that $\dim W_\A > 2$ because of ellipticity and the rank-one condition (mixing condition). Since $g_\A$ is injective on $(W_\A)^*$, cf. Lemma \ref{lem:injectivity_of_pullback_and_2d_property}{\it (i)}, we also have that $\dim (W_\A)^* \leq 4$. If $\dim (W_\A)^* = 4$ we have 
    \[
        g_\A((W_\A)^*) = \R^n \otimes V^*
    \]
    which, in particular, implies $\partial \sigma(\A) = \R^n \times V^*$ from which we derive the existence of a continuous function $h \subset \partial \sigma(\A)$. \\
    If $\dim W_\A = 3$ we have that $g_\A((W_A)^*)$ can be spanned by the three vectors
    \[
        \xi \otimes e, \, \eta \otimes f,\, \alpha (\xi \otimes f) + \beta (\eta \otimes e)
    \]
    for some $\alpha, \beta \in \R$. Furthermore, we notice that for $a, b, c, d \in \R^n$ we have
    \[
        (a \xi + b \eta) \otimes (c e + d f) = ac (\xi \otimes e) + bf (\eta \otimes f) + ad (\xi \otimes f) + bc (\eta \otimes e).
    \]
    If $a, b$ are given, we can choose $d = \alpha b$ and $c = \beta a$. With these choices of coefficients, we have
    \[
        (a \xi + b \eta) \otimes (c e + d f) \in g_\A((W_A)^*).
    \]
    Setting $a = 1$, $b = \lambda$, $\gamma = \alpha/\beta$ and dividing by $\beta$ we derive 
    \[
        (\xi + \lambda \eta) \otimes (e + \gamma \lambda f) \in g_\A((W_A)^*).
    \]
\end{proof}

\begin{lemma}\label{lem:directional_spectrum_surjectivity_projections}
    Let $\A$, defined as in \eqref{def:oper_first_ord}, be elliptic and satisfy the rank-one property. We have $P_{\R^n}(\partial \sigma(\A)) = \R^n$ and $P_{V^*}(\partial \sigma(\A)) = V^*$.
\end{lemma}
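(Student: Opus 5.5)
The plan is to show that the set of directions $\{\xi : (\xi, v^*) \in \partial\sigma(\A)\}$ and the set of $\{v^*: (\xi,v^*)\in\partial\sigma(\A)\}$ each span the whole space. Then we upgrade "spanning" to "everything" using the polarization property of Lemma \ref{lem:polarization}. Throughout we use that, by the rank-one property, $\partial\sigma(\A) = g_\A(\A_1^\otimes)$ and $\Span\,\A_1^\otimes = (W_\A)^*$.

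\textbf{Step 1: spanning.} Suppose the $V^*$-components $v^*$ of elements of $\partial\sigma(\A)$ all lie in a proper subspace. Then there is $0 \neq v \in V$ with $\langle v^*, v\rangle = 0$ for all of them. For every rank-one $w^*$ and every $\eta$ this gives $\langle w^*, \AA(\eta)v\rangle = 0$. Since rank-one vectors span $(W_\A)^*$ and $\AA(\eta)v \in W_\A$, we get $\AA(\eta)v = 0$ for all $\eta$, which contradicts ellipticity. Symmetrically, if all $\xi$-components are orthogonal to some $0 \neq \eta$, then $\AA(\eta)v = 0$ for every $v$, again contradicting ellipticity (as $\AA(\eta) \neq 0$). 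Hence both projections of $\partial\sigma(\A)$ span.

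\textbf{Step 2: from a spanning set to all of $\R^n$.} Note that $\partial\sigma(\A)$ is a cone closed under $(\xi,v^*) \mapsto (s\xi, tv^*)$ for all $s,t\in\R$, since $g_\A$ is linear and $s t\, w^*$ is again rank-one. Choose $(\xi_i, e_i) \in \partial\sigma(\A)$, $i=1,\dots,n$, with $\xi_1,\dots,\xi_n$ a basis. Given a target $\xi = \sum_i c_i \xi_i$, we build it inductively. Applying Lemma \ref{lem:polarization} to $(\xi_1,e_1)$ and $(\xi_2,e_2)$ with $\lambda = c_2/c_1$ (after scaling) gives an element $(c_1\xi_1 + c_2\xi_2, \tilde e)$ of $\partial\sigma(\A)$. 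We then apply Lemma \ref{lem:polarization} again to this element and $(\xi_3, e_3)$, and so on. The case $c_1 = 0$ is handled by reordering the indices, or trivially if only one coefficient is nonzero. Since Lemma \ref{lem:polarization} holds for arbitrary pairs in $\partial\sigma(\A)$, this yields every $\xi\in\R^n$.

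\textbf{Step 3: all of $V^*$.} The same induction works in the second component. We pick $(\xi_j, e_j)$ with $e_j$ spanning $V^*$ and target $v^* = \sum_j d_j e_j$. Lemma \ref{lem:polarization} produces $(\xi + \lambda\eta, e + \gamma\lambda f)$ with $\gamma \neq 0$, so we take $\lambda = d_2/(\gamma d_1)$ (after scaling the first pair by $d_1$), and iterate.

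The main obstacle is that $\gamma$ in Lemma \ref{lem:polarization} depends on the pair, and each step changes the other component uncontrollably. This is harmless here because we only track one component at a time. The real care goes into the degenerate cases in Step 1, namely making sure ellipticity is used in the correct form, $\AA(\eta)v\neq0$ for $\eta\neq0$ and $v\neq0$.
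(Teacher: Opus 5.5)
\textbf{There is a genuine gap, and it cannot be closed.} Your argument follows the same route as the paper's own proof (reduce to planes, then apply polarization), and it breaks at the same place.

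\textbf{What the argument actually proves.} Read literally, the statement needs none of this machinery. Taking $w^*=0$ in Definition~\ref{def:directional_spectrum} gives $(\xi,0)\in\partial\sigma(\A)$ and $(0,v^*)\in\partial\sigma(\A)$ for every $\xi$ and $v^*$. What the paper actually uses, in Lemma~\ref{lem:translation_lemma} and in the proof of Theorem~\ref{thm:DR}, is the nondegenerate version: every $\xi\neq0$ has a partner $v^*\neq0$, and every $v^*\neq0$ has a partner $\xi\neq0$.

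If you restrict Step~1 to such nondegenerate pairs, your argument works verbatim. It shows that their components span $\R^n$ and $V^*$. This part is correct.

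Steps~2 and~3, however, never check that the component you are not tracking stays nonzero. In Step~2, the new $V^*$-component $e_1+\gamma c_2e_2$ can vanish when $e_1\parallel e_2$. In Step~3, the $\xi_j$ attached to a basis $(e_j)$ of $V^*$ cannot all be independent when $N>n$, so $\xi_1+(d_2/\gamma)\xi_2$ can be $0$. As written, you therefore only recover the vacuous reading.

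\textbf{Why the nondegenerate version fails.} The passage from ``spanning'' to ``everything'' is false in general. So is Lemma~\ref{lem:polarization}, which you use as a black box. Take $n=2$, $V=\R^3$, $W=\R^4$ and
\[
\A u=(\partial_1u_1,\ \partial_1u_2-\partial_2u_3,\ \partial_1u_3-\partial_2u_1,\ \partial_2u_2).
\]
\begin{itemize}
\item \emph{Ellipticity.} A direct check shows that $\AA(\xi)v=0$ forces $v=0$ for every $\xi\in\C^2\setminus\{0\}$, so $\A$ is even $\C$-elliptic.
\item \emph{Directional spectrum.} Matching the coefficients of $\eta_iv_j$ shows that $(\xi,v^*)\in\partial\sigma(\A)$ if and only if $\xi_1v^*_3+\xi_2v^*_1=0=\xi_1v^*_2+\xi_2v^*_3$.
\item \emph{Rank-one property.} The vectors $w^*=(1,t^2,-t,t^3)$ satisfy $g_\A(w^*)=(1,t)\otimes(1,t^2,-t)$. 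They span $W^*=(W_\A)^*$, so $\A$ is rank-one.
\item \emph{Failure of polarization.} Let $(\epsilon^j)$ be the dual basis. Then $(e_1,\epsilon^1)$ and $(e_2,\epsilon^2)$ lie in $\partial\sigma(\A)$. But $(e_1+\lambda e_2,\epsilon^1+\gamma\lambda\epsilon^2)$ does not, for every $\lambda\neq0$ and every $\gamma$, because the first relation forces $\lambda=0$.
\item \emph{Failure of the $V^*$-projection claim.} $\epsilon^3$ is paired only with $\xi=0$. The nondegenerate $V^*$-components form the cone $v_1^*v_2^*=(v_3^*)^2$, which spans $V^*$ but is not all of $V^*$.
\end{itemize}
Exchanging the roles of $\xi$ and $v$ gives $n=3$, $V=\R^2$ and $\A u=(\partial_1u_1,\ \partial_2u_1-\partial_3u_2,\ \partial_3u_1-\partial_1u_2,\ \partial_2u_2)$. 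This operator is elliptic and rank-one, yet $\xi=e_3$ admits only $v^*=0$, so the $\R^n$-projection claim fails as well.

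\textbf{Where the paper's argument breaks.} The error in the proof of Lemma~\ref{lem:polarization} is the reduction to $\dim V=2$. A tensor $(\xi+\lambda\eta)\otimes(e+\gamma\lambda f)$ lying in the image of the restricted $g$ need not lie in $g_\A(W^*)$. In the example above, the lift exists only with the $\lambda$-dependent correction $(1,\lambda^2,-\lambda)$.

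What survives in general is exactly your Step~1, the spanning statement. The surjectivity claimed in the lemma would need additional hypotheses on $\A$.
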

\begin{proof}
    Reducing again to $n = 2$ and $\dim V = 2$ we infer the lemma from Lemma \ref{lem:polarization}. As before, by restricting the differential operator to planes, we derive the result for arbitrary $n$ and $\dim V$.
\end{proof}


\section{Compactness}
\label{sec:compactness}

In this section, we prove the compactness result. The proof of this result is based on a combination of ideas from \cite{AR20} and \cite[Section 10]{DM13}.

\begin{lemma}
\label{lem:translation_lemma}
    Let $\A$, defined as in \eqref{def:oper_first_ord}, be elliptic and satisfy the rank-one property. Let $K > 0$ and let $u \in L^1(\R^n; V)$ be such that 
    $$\| u \|_{L^1(\R^n; V)} < K.$$ 
    Suppose that for every $(\xi, v^*) \in \partial \sigma(\A) \cap (\s^{n-1} \times V^*)$ with $|v^*| = 1$ there exists $\omega_{(\xi, v^*)}: \R^+ \to \R^+$ non-decreasing with $\lim_{h \to 0} \omega_{(\xi, v^*)}(h) = 0$ such that for every $h \in (0,1)$ we have
    \[
        \int_{\R^n} |\langle v^*, u(x + h \xi)\rangle - \langle v^*, u(x) \rangle| \, dx \leq \omega_{(\xi, v^*)}(h).
    \]
    Then, there exist $h_0 \in (0,1)$ and $\tilde \omega : \R^+ \to \R^+$ non-decreasing with $\lim_{h \to 0} \tilde \omega(h) = 0$ (only depending on $K$ and finitely many $\omega_{(\xi, v^*)}$) such that for every $h \in (0, h_0)$ we have
    \begin{align}\label{eq:to_show}
        \int_{\R^n} |u(x + h \xi)- u(x)| \, dx \leq \tilde \omega(h).
    \end{align}
\end{lemma}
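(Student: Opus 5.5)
The plan is to deduce the full translation estimate \eqref{eq:to_show} from the scalar ones in the hypothesis in two steps: first control the components of $u$, then control the translations in an arbitrary direction. I read \eqref{eq:to_show} as a statement for every $\xi \in \s^{n-1}$, with $\tilde\omega$ independent of $\xi$.

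\emph{Step 1: a finite basis of test covectors.} By Lemma \ref{lem:directional_spectrum_surjectivity_projections} and the rank-one property, I expect to find finitely many pairs $(\xi_j, v_j^*) \in \partial\sigma(\A)$, $j=1,\dots,m$, with $|\xi_j|=|v_j^*|=1$, such that the $v_j^*$ span $V^*$ and the $\xi_j$ span $\R^n$. Once the $v_j^*$ span $V^*$, there is a constant $C$ with $|w| \le C\sum_j |\langle v_j^*, w\rangle|$ for all $w\in V$. So it suffices to control translates of the scalar functions $\langle v^*, u\rangle$ for each of finitely many $v^*$, but in every direction $\xi$, not only in the spectral direction attached to $v^*$.

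\emph{Step 2: from a spectral direction to all directions (main obstacle).} Fix $v^*=e$ with $(\xi_0,e)\in\partial\sigma(\A)$, and take any other $(\eta,f)\in\partial\sigma(\A)$. Lemma \ref{lem:polarization} gives $\gamma\neq0$ such that $(\xi_0+\lambda\eta,\, e+\gamma\lambda f)\in\partial\sigma(\A)$ for all $\lambda$. The idea, following \cite[Section 10]{DM13}, is to write a translation by $h\eta$ as a combination of translations along spectral directions and use the identity
\[
\langle e, \tau_{h\eta}u - u\rangle = \tfrac{1}{\lambda}\Big(\langle e+\gamma\lambda f, \tau_{h(\xi_0+\lambda\eta)/\lambda}\,u-u\rangle\Big) - \gamma\langle f,\ldots\rangle + \text{(translation in } \xi_0\text{)},
\]
in the spirit of the BD argument. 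There, components $\langle e_i,u\rangle$ along $e_j$ are recovered from the symmetric combinations $\langle e_i+e_j, \cdot\rangle$ in the direction $e_i+e_j$, together with the diagonal ones. Concretely, for a fixed $\lambda$, say $\lambda=1$, I would decompose the translation by $h(\xi_0+\eta)$ into a translation by $h\xi_0$ followed by one by $h\eta$. The triangle inequality and translation invariance of the $L^1$ norm then give
\[
\|\langle e+\gamma f, \tau_{h\eta}u-u\rangle\| \le \omega_{(\xi_0+\eta,\,e+\gamma f)}(h') + \|\tau_{h\xi_0}(\cdot)-(\cdot)\|.
\]
The residual term involves $\langle e+\gamma f,\tau_{h\xi_0}u-u\rangle$, which couples $f$ with the direction $\xi_0$ and is not directly controlled; this is the delicate point. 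The remedy I would try is to use two different values of $\lambda$, say $\lambda=\pm1$, and take differences so that the mixed terms cancel, at the cost of a factor $\gamma^{-1}$. If exact cancellation fails, I would fall back on the compactness argument of \cite{DM13}: the bound $\|u\|_{L^1}<K$ together with equicontinuity along the spectral directions, passed through mollification, gives a modulus for the mixed terms that depends only on $K$. This is where $K$ enters $\tilde\omega$. Since the argument reduces to two-dimensional $\Span\{\xi,\eta\}\times\Span\{e,f\}$, Lemma \ref{lem:injectivity_of_pullback_and_2d_property}(ii) allows working there.

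\emph{Step 3: conclusion.} For a general $\xi\in\s^{n-1}$, write $\xi=\sum_i a_i\xi_i$ in the basis from Step 1 with bounded coefficients. A translation by $h\xi$ is then a composition of translations by $ha_i\xi_i$, so
\[
\|\tau_{h\xi}w - w\|_{L^1}\le\sum_i\|\tau_{ha_i\xi_i}w-w\|_{L^1},
\]
and monotonicity of the moduli lets me take $\tilde\omega$ as a finite sum of the resulting moduli times the constant $C$ from Step 1. The threshold $h_0$ is chosen so that all rescaled step sizes $h|a_i|$ (and the $h'$ from Step 2) lie in $(0,1)$, as the hypothesis requires.
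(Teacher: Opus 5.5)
Your Steps 1 and 3 are harmless; the paper reduces to coordinate directions rather than a spectral basis, which makes no difference. However, Step 2 is the entire content of the lemma. You leave it open, and the remedy you suggest cannot work.

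With a fixed polarization parameter $\lambda$, every relation you can write is a linear identity among first differences along fixed directions. In such an identity the unwanted term $\gamma\lambda\langle f,\tau_{h\xi_0}u-u\rangle$ has the same size as the term you want. Concretely:
\begin{itemize}
\item Adding the relations for $\lambda=\pm1$ does cancel $\gamma\langle f,\tau_{h\xi_0}u-u\rangle$. What survives, however, is (a translate of) the second difference $\langle e,\tau_{h\eta}u+\tau_{-h\eta}u-2u\rangle$, not a first difference.
\item Subtracting the two relations leaves the mixed term in place.
\end{itemize}
No other identity of this type, with $\lambda$ and the step ratios fixed, can do better. For smooth compactly supported $u$ every spectral difference at step of order $h$ is bounded by $Ch|\A u|(\R^n)$, by slicing as in the proof of Theorem \ref{thm:compactness}. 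Such an identity would therefore give $|\partial_\eta\langle e,u\rangle|(\R^n)\le C|\A u|(\R^n)$. This already fails for the symmetrized gradient, by Ornstein's non-inequality. Getting from the second-difference bound to a first-order modulus would need an extra tool, for example a Marchaud-type inequality combined with the $L^1$ bound, and you do not invoke one. Your fallback to ``the compactness argument of \cite{DM13}'' only names the missing step; it does not supply it.

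The missing idea is to let $\lambda$ depend on $h$. The paper estimates $\langle w_i^*,u(x+h\xi)-u(x)\rangle$, with $(\xi,v^*),(\xi_i,w_i^*)\in\partial\sigma(\A)$, by passing through the intermediate point $x+\sqrt h\,\xi_i$.
\begin{itemize}
\item The first leg, from $x$ to $x+\sqrt h\,\xi_i$, is a spectral difference for $w_i^*$ at scale $\sqrt h$, so the hypothesis controls it.
\item The second leg is $h\xi-\sqrt h\,\xi_i=-\sqrt h\,(\xi_i-\sqrt h\,\xi)$. It has length about $\sqrt h$. By Lemma \ref{lem:polarization} with $\lambda=-\sqrt h$, its direction is spectral for $g_i^*=w_i^*-\gamma_i\sqrt h\,v^*$.
\end{itemize}
On the second leg write $w_i^*=g_i^*+\gamma_i\sqrt h\,v^*$. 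The $g_i^*$-part is controlled by the hypothesis. The remainder costs at most $2|\gamma_i|\sqrt h\,\|u\|_{L^1}\le 2K|\gamma_i|\sqrt h$.

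So the mixed term is never cancelled. It is made small by sending $\lambda\to0$ while keeping $h/\lambda\to0$, and the $L^1$ bound pays for the small covector mismatch. This, not a mollification argument, is where $K$ enters $\tilde\omega$.
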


\begin{proof}
    Let $\xi \in \s^{n-1}$. Assume first that  
    \[
        \int_{\R^n} |u(x \pm h e_i)- u(x)| \, dx \leq \tilde \omega_i(|h|)
    \]
    holds for every $i = 1,\ldots, n$ and every $h \in (0,1)$ for some modulus of continuity $\tilde \omega_i$.
    Then,
    \begin{align}
    \label{eq:to_show_2}
        \int_{\R^n} |u(x + h \xi) - u(x)| \, dx 
        &\leq \sum_{j = 1}^N \int_{\R^n} \left|u \left(x + h\sum_{i = 1}^j \xi_i e_i \right) - u \left(x + h\sum_{i = 1}^{j -1} \xi_i e_i \right) \right| \, dx \nonumber \\
        &= \sum_{j = 1}^N \int_{\R^n} |u(x + h\xi_j e_j) - u(x)| \, dx  \nonumber \\
        &\leq \sum_{j = 1}^N \tilde \omega_j(h|\xi_j|)  \nonumber \\
        &\leq \sum_{j = 1}^N \tilde \omega_j(h).
    \end{align}
    It therefore suffices to only check \eqref{eq:to_show} for a finite number of $\xi$.
    \\
    Let $\xi \in \s^{n-1}$. Choose $v^* \in V^*$ such that $|v^*| = 1$ and $(\xi, v^*) \in \partial \sigma (\A)$, which is possible due to Lemma \ref{lem:directional_spectrum_surjectivity_projections}. Moreover, choose $(w_i^*)_{i = 1}^{N-1}$ such that $v^*, w_1^*, ..., w_{N - 1}^*$ form an orthonormal basis of $V^*$. Now, we again apply Lemma \ref{lem:directional_spectrum_surjectivity_projections} to get $\xi_{i} \in \s^{n-1}$ such that $(\xi_i, w_i^*) \in \partial \sigma(\A)$. Then,
    \[
        |u(x + h\xi) - u(x)| \leq |\langle v^*, u(x + h\xi) - u(x) \rangle| + \sum_{i = 1}^{N - 1}|\langle w_i^*,u(x + h\xi) - u(x)\rangle |.
    \]
    We directly estimate 
    \begin{align*}
        &|\langle w_i^*, u(x + h\xi) - u(x)\rangle| \\
        \leq ~ & |\langle w_i^*, u(x + h\xi) - u(x + \sqrt h \xi_{i})\rangle| + |\langle w_i^*, u(x + \sqrt h \xi_{i}) - u(x)\rangle|.
    \end{align*}
    Now, define $\zeta_i \in \s^{n-1}$ and $s_h^i \in \R$ for every $i = 1, \ldots, N-1$ via the equation
    \[
        x + h\xi = x + \sqrt h \xi_{i} + s_h^i \zeta_{i}
    \]
    or, equivalently,
    \[
        \zeta_i = (\xi_i + \sqrt h \xi)\frac{\sqrt h}{s_h^i}.
    \]
    By Lemma \ref{lem:polarization} for every $i = 1, \ldots, N-1$ there exists $\gamma_i \in \R$ such that $g_i^* := (w_i^* + \gamma_i \sqrt h v^*)/|w_i^* + \gamma_i \sqrt h v^*|$ fulfills $(\zeta_i, g_i^*) \in \partial \sigma(\A)$. We now notice that $|s_h^i| \leq 2\sqrt h$ from which we further infer $|\zeta_i - \xi_i| \leq 2\sqrt h$. By construction, we also have $|w^*_i - g^*_i| \leq L|\gamma_i||\sqrt h|$ for a constant $L > 0$ and small $h > 0$. From this, we now derive  
    \begin{align*}
        &|\langle w_i^* - g_i^*, u(x + h\xi)\rangle| + |\langle g_i^*, u(x + h\xi) - u(x + \sqrt h \xi_{i})\rangle| + |\langle w_i^* - g_i^*, u(x + \sqrt h \xi_{i})\rangle| \\
        \leq \ & (|u(x + h\xi)| + |u(x + \sqrt h\xi_i)|)|\gamma_i| \sqrt h + |\langle g_i^*, u(x + \sqrt h \xi + s_h^i \zeta_i) - u(x + \sqrt h \xi) \rangle|.
    \end{align*}
    Integrating gives 
    \begin{align*}
        &\int_{\R^n} |u(x + h\xi) - u(x)| \, dx \\
        & \hspace{4em} \leq \sum_{i = 1}^N \int_{\R^n} |\langle g_i^*, u(x + \sqrt h \xi + s_h^i \zeta_i) - u(x + \sqrt h \xi) \rangle | \, dx + 2 K \left(\sum_{i = 1}^N|\gamma_i| \right)\sqrt h \\
        & \hspace{4em} = \sum_{i = 1}^N \int_{\R^n} |\langle g_i^*, u(x + s_h^i \zeta_i) - u(x) \rangle | \, dx + 2 K \left(\sum_{i = 1}^N|\gamma_i| \right)\sqrt h \\
        & \hspace{4em} \leq \sum_{i = 1}^N \omega_{(\zeta_i, g_i^*)}(h) + 2 K \left(\sum_{i = 1}^N|\gamma_i| \right)\sqrt h. \\
    \end{align*}
    This yields \eqref{eq:to_show} with $\omega_\xi(h) := \sum_{i = 1}^N \omega_{(\zeta_i, g_i^*)}(h) + 2 K \left(\sum_{i = 1}^N|\gamma_i| \right)\sqrt h$ for an arbitrary $\xi \in \s^{n-1}$. Defining 
    \[
        \tilde \omega := \sum_{i = 1}^N \tilde \omega_{e_i}(h),
    \]
    we infer \eqref{eq:to_show} for general $\xi \in \s^{n-1}$ from \eqref{eq:to_show_2}.
\end{proof}

As a direct consequence, we now derive a compactness theorem for $BV^\A(\Omega; V)$.

\begin{theorem}
\label{thm:compactness}
    Let $\Omega \subset \R^n$ be a Lipschitz domain, and let $\A$ be defined as in \eqref{def:oper_first_ord}, be elliptic and satisfy the rank-one property. Then, for every sequence $\{u_m\}_{m \in \N} \subset BV^\A(\Omega; V)$ with 
    \[
        \| u_m \|_{BV^\A(\Omega;V)} < K
    \]
    there exist a subsequence $\{u_{m_j}\}_j$ and $u \in BV^\A(\Omega; V)$ with 
    \[
        u_{m_j} \to u
    \]
    in $L^1(\Omega; V)$. 
\end{theorem}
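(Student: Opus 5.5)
The plan is a Fréchet--Kolmogorov (Riesz--Kolmogorov) argument: extend the $u_m$ to all of $\R^n$ with uniform control, check the directional translation estimates required by Lemma \ref{lem:translation_lemma} via the slicing Proposition \ref{prop:slicing_theory}, and conclude relative compactness in $L^1$.

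\textbf{Step 1: extension.} Since $\Omega$ is a bounded Lipschitz domain and $\A$ is elliptic with the rank-one property, it is $\C$-elliptic by Lemma \ref{lmm:ellipticities} together with the equivalence with \eqref{mixing}. I would then invoke the trace theory of \cite{BDG20}: there is a bounded trace operator $BV^\A(\Omega;V)\to L^1(\partial\Omega;V)$. The extension $\bar u_m$ of $u_m$ by zero then lies in $BV^\A(\R^n;V)$, with
\[
|\A \bar u_m|(\R^n)\le |\A u_m|(\Omega)+C\|\mathrm{tr}\, u_m\|_{L^1(\partial\Omega)}\le C K .
\]
Alternatively, one could use a bounded extension operator onto a larger ball followed by a cutoff. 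Either way $\bar u_m$ has support in a fixed compact set and $\|\bar u_m\|_{L^1}\le K$.

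\textbf{Step 2: directional translation estimates.} Fix $(\xi,v^*)\in\partial\sigma(\A)$ with $|\xi|=1=|v^*|$, and let $w^*$ be the associated rank-one vector. Apply Proposition \ref{prop:slicing_theory} on $\R^n$, after mollifying if necessary. The one-dimensional slices $t\mapsto\langle v^*,\bar u_m(y+t\xi)\rangle$ are then $BV$, and the total variations integrate to at most $CK|w^*|$. The standard one-dimensional estimate $\int_\R|f(t+h)-f(t)|\,dt\le h|Df|(\R)$, integrated in $y\in\{\xi\}^\perp$, gives
\[
\int_{\R^n}|\langle v^*,\bar u_m(x+h\xi)-\bar u_m(x)\rangle|\,dx\le CK|w^*|\,h .
\]
This is a modulus $\omega_{(\xi,v^*)}(h)=CK|w^*|h$, uniform in $m$. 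Here I must normalise $w^*$ consistently with $|v^*|=1$. Only finitely many pairs are used in Lemma \ref{lem:translation_lemma}, so finiteness of the relevant $|w^*|$ is all that is needed.

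\textbf{Step 3: conclusion.} Lemma \ref{lem:translation_lemma} then yields a modulus $\tilde\omega$, independent of $m$, with $\int|\bar u_m(x+h\xi)-\bar u_m(x)|\,dx\le\tilde\omega(h)$ for all directions $\xi$. Together with uniform $L^1$ bounds and uniformly bounded supports, the Fréchet--Kolmogorov theorem gives a subsequence $u_{m_j}\to u$ in $L^1(\Omega;V)$.

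By lower semicontinuity of the total variation under distributional convergence, $|\A u|(\Omega)\le\liminf_j|\A u_{m_j}|(\Omega)\le K$. Indeed, $\A u_{m_j}\to\A u$ in $\D'$ and the measures are bounded, so $\A u\in\M(\Omega;W)$ and hence $u\in BV^\A(\Omega;V)$.

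\textbf{Main obstacle.} I expect Step 1 to be the main obstacle: getting a uniform bound on $|\A\bar u_m|$ requires a trace (or extension) theorem on Lipschitz domains. This is where \cite{BDG20} and $\C$-ellipticity enter, and it must be checked that their hypotheses cover merely Lipschitz boundaries. A secondary point is applying Proposition \ref{prop:slicing_theory} on the whole space after extension, which I would handle by stating it for $\Omega=\R^n$, or for a large ball containing the supports.
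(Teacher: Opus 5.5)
Your proposal is correct and follows essentially the same route as the paper: you extend to $\R^n$ via the trace theory of \cite{BDG20} for $\C$-elliptic operators, obtain linear-in-$h$ directional moduli from Proposition \ref{prop:slicing_theory} and Fubini, then apply Lemma \ref{lem:translation_lemma}, Fr\'echet--Kolmogorov and weak$^*$ closedness to get $\A u\in\M$ (the paper additionally mollifies the extensions, which is not essential). On your normalisation remark: the pairs $(\zeta_i,g_i^*)$ used inside the proof of Lemma \ref{lem:translation_lemma} actually depend on $h$, so what is really needed is a uniform bound on $|w^*|$; this holds because you can always choose $w^*\in(W_\A)^*$, and injectivity of $g_\A$ there (Lemma \ref{lem:injectivity_of_pullback_and_2d_property}(i)) gives $|w^*|\le C|\xi\otimes v^*|=C$ for all normalised $(\xi,v^*)\in\partial\sigma(\A)$.
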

\begin{proof}    
    By the trace theorem for $\C$-elliptic operators (cf.\ \cite{BDG20}), we know that we can extend $u_m$ to $\R^n$ (without renaming it). Now, let $\phi_\e$ be a mollifier, choose $\e_m \to 0^+$ such that for $v_m = u_m \ast \phi_{\e_m} \in C_c(\R^n; V)$ we have
    \begin{align}\label{eq:approximation}
        \| v_m - u_m \|_{L^1(\R^n, V)} + \big||Dv_m|(\R^n) - |Du_m|(\R^n) \big| \leq \frac{1}{m}.
    \end{align}
    In particular, 
    \[
        |Dv_m|(\R^n) \leq \tilde K 
    \]
    for some large enough constant $\tilde K$. For $h \in (0,1)$ and $w \in \{u_m\}$, we apply Fubini's Theorem and Theorem \ref{prop:slicing_theory} to derive 
    \begin{align*}
        &\int_{\R^n} |\langle v^*, w(x + h \xi) - w(x) \rangle| \, dx \\
        &= \int_{\{\xi\}^\perp} \int_\R |\langle v^*, w(y + (t + h)\xi) - w(y + t\xi) \rangle| \, dt \, dy \\
        &\leq \int_{\R^{n-1}} \int_\R \int_t^{t + h} |(w^{v^*}_{y, \xi})'(s)| \, ds \, dt \, dy \\
        &\leq h C \int_{\R^N} |\A w| \, dx \\
        &\leq h C\tilde K
    \end{align*}
    We have shown that 
    \[
        \omega_{(\xi, v^*)} : = \left(h \mapsto h C\tilde K \right)
    \]
    is a suitable modulus of continuity for Lemma \ref{lem:translation_lemma}. Therefore, applying Lemma \ref{lem:translation_lemma}, we infer for every $m \in \N$
    \[
        \int_{\R^n} |v_m(x + h \xi)- v_m(x)| \, dx \leq \tilde \omega(h)
    \]
    for a modulus of continuity $\tilde\omega$. Applying the Fréchet-Kolmogorov compactness theorem, we infer the existence of $u$ such that $v_m \to u$ in $L^1(\R^n; V)$ up to a subsequence (without renaming). Since $\A v_m$ are uniformly bounded as measures they converge weakly$^*$ to some $\mu \in \M_b(\R^n; W)$ and since $v_m \to u$ in $D'(\R^n; V)$ we infer $\A u = \mu$, i.e., $u \in BV^\A(\R^n; V)$. Recalling \eqref{eq:approximation}, we conclude.
\end{proof}

\begin{remark}\label{remark:poincare_embedding}
    In \cite[Theorem 1.1]{GR19_0} it was shown that for $p \leq n/(n-1) = 1^*$
    \begin{align}\label{eq:embedding}
        \|u\|_{L^p(\Omega; V)} \leq C (\|u\|_{L^1(\Omega; V)} + |\AA u|(\Omega))
    \end{align}
    holds for $u \in BV^\A(\Omega; V)$ for $\Omega = B$ for a ball $B$. By using \eqref{eq:embedding}, the compactness result can be transferred to $L^p(\Omega; V)$. Indeed, applying the generalised Hölder inequality 
    \[
        \|u\|_{L^p} \leq \|u\|_{L^1}^\alpha\|u\|_{L^{1^*}}^{(1-\alpha)},
    \]
    for 
    \[
        \alpha = \frac{\frac{1}{p} - \frac{1}{1^*}}{1 - \frac{1}{1^*}}
    \]
    one can immediately deduce that bounded sequences in $BV^\A(\Omega;V)$ that converge in $L^1(\Omega; V)$ also converge in $L^p(\Omega; V)$ for $p \in [1, 1^*)$. As another direct consequence of \eqref{eq:embedding}, we obtain for any two open sets $\tilde \Omega, \Omega$ with $\tilde \Omega \subset \subset \Omega$
    \[
        \| u \|_{L^{1^*}(\tilde \Omega)} \leq C(\tilde \Omega) (\|u\|_{L^1(\Omega; V)} + |\AA u|(\Omega)).
    \]
\end{remark}


\section{Proof of the main result}
\label{sec:regularity}

In this section, we prove the main result of this paper, namely Theorem \ref{thm:DR}. As a first step, we establish a local regularity result for distributions with $\A u$ being a bounded measure.

\begin{theorem}
\label{thm:local_reg}
    Let $\Omega \subset \R^n$ be a bounded domain and let $\A$, defined as in \eqref{def:oper_first_ord}, be elliptic and satisfy the rank-one property. If $u \in \D'(\Omega; V)$ and $\A u \in \M(\Omega; W)$ then $u \in BV_\loc^\A(\Omega; V)$.
\end{theorem}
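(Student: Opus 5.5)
We plan to follow the classical Temam strategy for $BD$, adapted to $\A$ through mollification and the compactness result of Theorem \ref{thm:compactness}. Since $BV^\A_\loc$ is a local property, it suffices to show that $u \in L^1(B; V)$ for every ball $B$ with $\overline{B'} \subset \Omega$, where $B' \supset\supset B$ is a slightly larger concentric ball. Fix such balls and a standard mollifier $\phi_\e$. For $\e < \dist(B', \partial\Omega)$ the function $u_\e := u \ast \phi_\e$ is smooth on $B'$, and $\A u_\e = (\A u)\ast\phi_\e$ there. Hence
\[
|\A u_\e|(B') \le |\A u|(\Omega) \quad \text{uniformly in } \e,
\]
while $u_\e \to u$ in $\D'(B';V)$. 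So $\{u_\e\}$ is uniformly bounded in $BV^\A$-seminorm on $B'$, but there is no a priori $L^1$ bound, since $u$ is only a distribution.

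To remove the missing $L^1$ control we quotient out the kernel of $\A$. Ellipticity together with the rank-one property gives $\C$-ellipticity by Lemma \ref{lmm:ellipticities}. Hence $\ker \A$ restricted to the connected set $B'$ is a finite-dimensional space $\mathcal{N}$ of polynomials. Let $\Pi$ be the $L^2(B')$-orthogonal projection onto $\mathcal{N}$ and set $w_\e := u_\e - \Pi u_\e$. The key intermediate claim is a Poincaré inequality
\[
\|w\|_{L^1(B';V)} \le C\,|\A w|(B') \quad \text{for } w \in BV^\A(B';V),\ \Pi w = 0.
\]
We will prove it by contradiction in the usual way. Suppose $w_m$ has $\|w_m\|_{L^1}=1$, $\Pi w_m = 0$ and $|\A w_m|(B')\to 0$. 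Theorem \ref{thm:compactness} on the Lipschitz set $B'$ gives a subsequence converging in $L^1$ to some $w$ with $\|w\|_{L^1}=1$. Lower semicontinuity gives $\A w = 0$, so $w \in \mathcal{N}$. On the finite-dimensional $\mathcal{N}$ all norms are equivalent, so $\Pi$ passes to the limit and yields $\Pi w = 0$. Then $w = 0$, a contradiction.

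Applying the Poincaré inequality to $w_\e$ makes $\{w_\e\}$ bounded in $BV^\A(B';V)$. Theorem \ref{thm:compactness} then gives $w_{\e_j} \to w$ in $L^1(B';V)$ with $w\in BV^\A(B';V)$. Since $u_{\e_j} \to u$ in $\D'(B')$, the polynomials $p_j := \Pi u_{\e_j} = u_{\e_j} - w_{\e_j}$ converge in $\D'(B')$. A sequence in a finite-dimensional subspace that converges distributionally has its limit $p$ in that subspace, and the convergence holds in every norm. Therefore $u = w + p \in L^1(B';V)$, and $\A u \in \M$ there, which gives $u \in BV^\A(B;V)$.

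The main obstacle is making sure Theorem \ref{thm:compactness} really applies on balls; this uses the extension/trace theory of \cite{BDG20}, which requires $\C$-ellipticity, hence the appeal to Lemma \ref{lmm:ellipticities}. A secondary point is the finite dimensionality of $\ker\A$ on a ball, which also follows from $\C$-ellipticity (cf.\ \cite{GR19_0}). If one prefers to avoid the kernel, an alternative is to localise with a cutoff $\chi u_\e$ and control the commutator term $A(\nabla\chi)u_\e$. That route, however, needs a lower-order bound on $u_\e$, which we do not have, so the quotient approach above is the one we commit to.
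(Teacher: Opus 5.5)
Your proof is correct, and it takes a genuinely different route from the paper in how the kernel of $\A$ is normalized. Both arguments use the same ingredients: $\C$-ellipticity via Lemma \ref{lmm:ellipticities}, the finite-dimensional polynomial kernel from \cite{GR19_0}, the uniform bound $|\A u_\e|\le|\A u|(\Omega)$ for mollifications, and Theorem \ref{thm:compactness} applied twice.

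\textbf{The paper's route.} It builds a functional $\rho$ from pairings $\langle w,\psi^k b_i\rangle$ with test functions. It then asserts a Poincar\'e inequality $\|v\|_{BV^\A}\le C(\rho(v)+|\A v|)$. Because $\rho$ is continuous under distributional convergence, $\rho(w_\e)$ stays bounded, so the cut-off mollifications $w_\e$ themselves are bounded in $BV^\A$.

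\textbf{Your route.} You normalize with the $L^2(B')$-projection $\Pi$, which is not continuous on $\D'(B')$, so you only control $u_\e-\Pi u_\e$. You then recover the kernel part afterwards: $\mathcal N$ is finite-dimensional, hence closed in $\D'(B')$, so $\Pi u_{\e_j}=u_{\e_j}-w_{\e_j}$ converges to some $p\in\mathcal N$.

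\textbf{Comparison.} Your route avoids the family $\psi^k$ and the $w$-dependent weights $c_{i,k}$, which make the paper's $\rho$ non-homogeneous. It also avoids the cutoff $\phi u$, and it proves the Poincar\'e inequality explicitly by the standard contradiction argument rather than asserting it. In exchange, the paper bounds the approximants directly rather than only modulo $\ker\A$.

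\textbf{One wording fix.} For $n\ge 3$, elements of $BV^\A(B')$ need not lie in $L^2(B')$. So $\Pi$ should be read as $w\mapsto\sum_i\bigl(\int_{B'}\langle b_i,w\rangle\,dx\bigr)b_i$, which is continuous on $L^1(B')$ because the $b_i$ are bounded on $B'$. That continuity, rather than equivalence of norms on $\mathcal N$, is what lets $\Pi w_m=0$ pass to the limit in your contradiction argument.
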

\begin{proof}
    By Lemma \ref{lmm:ellipticities}, it follows that $\A$ is $\C$-elliptic. By \cite[Proposition 3.1]{GR19_0} we know that assuming that $\A$ is $\C$-elliptic is equivalent to the fact that there exists an integer $l = l(\A)$ such that distributions satisfying $\A u = 0$ in $\Omega$ are polynomials of degree at most $l$. Set
    \begin{equation*}
        F := \{ \textnormal{polynomials of degree at most } l \}
    \end{equation*}
    and consider $\omega \subset \subset \Omega$. Observe that $F \subset L^2(\omega) \cap \mathcal{C}^\infty(\R^n)$, thus we can consider an orthonormal basis $\{b_i\}_{i = 0}^m$ of $F$ with respect to $L^2(\omega)$. Now, let $\{\psi^k\}_k \subset \mathcal{C}^\infty_c(\Omega; [0,1])$ be a family of bump functions such that 
    $$
        \psi^k \searrow \chi_\omega
    $$
    pointwise with $\psi^k = 1$ on $\omega$ and $\psi^k(x) = 0$ for every $x \in \Omega$ such that $\dist(x,\omega) > \tfrac{1}{k}$. Note that this implies
    \begin{equation*}
        \lim_{k \to +\infty} \langle \psi^k b_j, b_i \rangle_{L^2(\Omega)} = \delta_{ij}
    \end{equation*}
    for every $i,j \in [0,m] \cap \N_0$. For $w \in L^1(\Omega; V)$ we define the continuous seminorm 
    \begin{equation*}
        \rho(w) := \sum_{i = 0}^m \sum_{k \in \N} \frac{1}{2^kc_{i,k}} \left| \int_\Omega \psi^k b_iw \, dx \right|.
    \end{equation*}
    where 
    \[
        c_{i,k} := \max\{1, \langle w, \psi^k b_i \rangle\}.
    \]
    Let us remark that for $w \in L^1(\Omega;V) \cap L^2(\Omega; V)$ we have
    \begin{equation*}
    \begin{aligned}
        \rho(w) = 0
        &\quad \iff \quad | \langle \psi^k b_i, w \rangle_{L^2(\Omega)} | = 0 \quad \forall \, i \in [0,m] \cap \N_0, \forall \, k \in \N \\
        &\quad \implies \quad | \langle b_i, w \rangle_{L^2(\omega)} | = 0 \quad \forall \, i \in [0,m] \cap \N_0 \\
        &\quad \iff \quad w|_{\omega} \perp F,
    \end{aligned}
    \end{equation*}
    meaning that $\ker \A \cap \ker \rho = \emptyset$. Note that $\rho$ can be defined for $v \in L^1(\tilde \Omega, V)$ for any $\tilde \Omega \subset \Omega$ with 
    \[
        \rho(v) := \rho(\tilde v)
    \]
    where $\tilde v$ is
    \[
        \tilde v(x) := \begin{cases}
            v(x) & x \in \tilde \Omega, \\
            0 & x \in \Omega \setminus \tilde \Omega.
        \end{cases}
    \]
    In particular, by an application of Theorem \ref{thm:compactness}, we have for Lipschitz domains $\tilde \omega, \tilde \Omega$ with $\omega \subset \subset \tilde \omega \subset \subset \tilde \Omega \subset \Omega$ that for every $v \in BV^\A(\widetilde \Omega; V)$
    \begin{align}\label{eq:poincare_bva_2}
        \| v \|_{BV^\A(\tilde \Omega; V)} \leq C(\tilde \Omega)(\rho(v) + |\A v|(\widetilde \Omega))
    \end{align}
    and, consequently, by Remark \ref{remark:poincare_embedding} we have
    \begin{figure}
        \centering
        \includegraphics[width=0.6\linewidth]{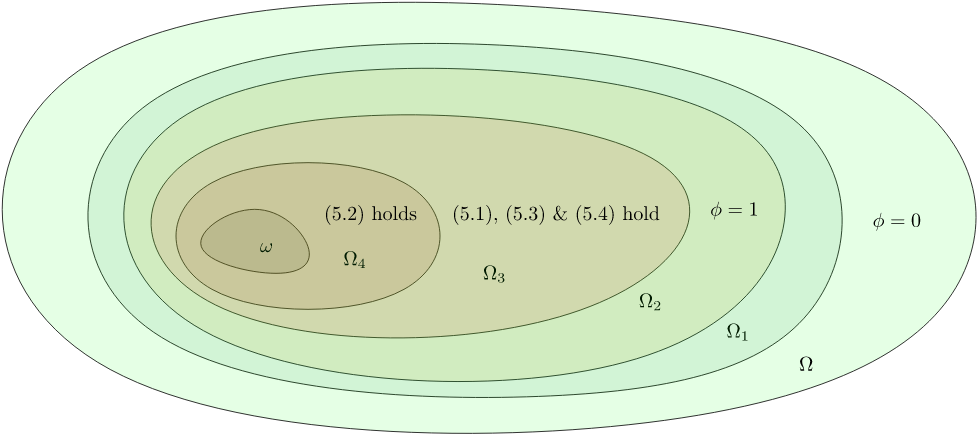}
        \caption{The nested set sequence used in the proof of Theorem \ref{thm:local_reg}.}
        \label{fig:nested_sequence}
    \end{figure}
    \begin{align}\label{eq:poincare_bva}
        \| v \|_{L^p(\tilde \omega)} \leq C(\tilde \omega)(\rho(v) + |\A v|(\widetilde \Omega)).
    \end{align}
    Now, let us consider nested domains 
    $$\omega \subset \subset \Omega_4 \subset \subset \Omega_3 \subset \subset \Omega_2 \subset \subset \Omega_1 \subset \subset \Omega$$ with $\Omega_i$ being Lipschitz for every $i = 1, \ldots, 4$, and without loss of generality assume $\supp \, \psi^k \subset \Omega_3$. 
    Given a function $\phi \in \mathcal{C}^\infty(\Omega)$ such that $\phi = 1$ on $\Omega_2$ and $\phi = 0$ on $\Omega \setminus \Omega_1$, we define $w := \phi u$. For every $\e > 0$ consider the family of mollifiers $\phi_\e \in \mathcal{C}^\infty(\R^n)$. Notice that $w_\e := \phi_\e \ast w \in \mathcal C^\infty(\Omega; V)$ is well-defined for sufficiently small $\e > 0$, and that 
    \begin{align}\label{eq:restriction_of_rho}
        \rho(w_\e) = \rho(w_\e|_{\Omega_3}).
    \end{align}
    Furthermore, we notice that
    \begin{align}
        (\A w_\e)(x) = \int_{B_\e(x)}\varphi_\e \, d \A u  
    \end{align}
    for $x \in \Omega_3$ and $\e > 0$ sufficiently small. By combining \eqref{eq:poincare_bva} with \eqref{eq:restriction_of_rho}, we infer
    \begin{align*}
        \int_{\Omega_3} |w_\e| \, dx \leq C(\Omega_3)(\rho(w_\e) + |\A w_\e |(\Omega_3)) \leq  C(\Omega_3) \left(\rho(w_\e) + | \A u |(\Omega) \right).
    \end{align*}
    and, therefore by \eqref{eq:poincare_bva_2},
    \begin{align*}
        \| w_\e \|_{L^p(\Omega_4)} \leq C(\Omega_3)(\rho(w_\e) + |\A w_\e|(\Omega_3)) \leq  C(\Omega_3) \left(\rho(w_\e) + | \A u |(\Omega) \right)
    \end{align*}
    Since $w_\e \to w$ in $\D'(\Omega, \R^n)$ we obtain
    \begin{align*}
        \rho(w_\e) &= \sum_{i = 0}^m \sum_{k \in \N} \frac{1}{2^kc_{i,k}} \left| \int_\Omega \psi^k b_iw_\e \, dx \right| \\
        & \hspace{2cm} = \sum_{i = 0}^m \sum_{k \in \N} \frac{1}{2^kc_{i,k}} \left| \langle w_\e, \psi^k b_i \rangle \right| \xrightarrow{\e \to 0} \sum_{i = 0}^m \sum_{k \in \N} \frac{1}{2^kc_{i,k}} \left| \langle w, \psi^k b_i \rangle \right|.
    \end{align*}
        
    Since by construction
    \[
        \sum_{i = 0}^m \sum_{k \in \N} \frac{1}{2^kc_{i,k}} \left| \langle w, \psi^k b_i \rangle \right| \leq 2m,
    \]
    we have, in particular,  
    \[
        \| w_\e \|_{BV^\A(\Omega_4; V)} < C
    \]
    with a constant $C$ only depending on the nested domains.
    Applying the compactness result, namely Theorem \ref{thm:compactness}, and due to $w_\e \to w$ in $\D'(\Omega_4; V)$ we infer $w \in BV^\A(\Omega_4; V)$. Since the nesting open sets can be chosen arbitrarily, we conclude $u \in BV^\A_{\rm loc}(\Omega; V)$.
\end{proof}

\begin{remark}
    We note here that the proof of Theorem \ref{thm:local_reg} only requires a compactness result which is similar to Theorem \ref{thm:compactness}. In fact, we do not use slicing techniques, but rely only on basic distribution theory. We want to emphasize that the proof uses a different approach to the usual reference for plasticity (cf.\ \cite{temam}), which relies more on classical elasticity theory and the specific form of the symmetrized gradient.
\end{remark}

Now, we have all the tools ready to finally prove Theorem \ref{thm:DR}.

\begin{proof}[Proof of Theorem \ref{thm:DR}]
    By Theorem \ref{thm:local_reg}, we have $u \in L^1_{\loc}(\Omega;V)$. We have to show integrability up to the boundary. Let $x_0 \in \partial\Omega \setminus \mathcal{P}$, where $\mathcal{P}$ denotes the set of non-differentiability points of $\partial\Omega$. Denote with $\nu(x_0)$ the normal to the boundary of $\Omega$ at the point $x_0$. Without loss of generality, we assume that $\nu(x_0) = e_n$. Note that the set $\mathcal{P}$ has Lebesgue measure zero by Sard's theorem. Now, let $Q$ be a cube centred at $x_0$ such that $\Sigma := Q \cap \partial \Omega$ can be written as a graph with respect to its basis, which is possible since $x_0 \notin \mathcal P$. Without loss of generality, using the notation $x' = (x_1, \ldots, x_{n-1})$, we write 
    \begin{equation*}
        \Sigma = \{(x',x_n) : x' \in \mathcal{O}, x_n = a(x')\},
    \end{equation*}
    where $\mathcal{O}$ is a cube in $\R^{n-1}$ and $a$ is a Lipschitz function which is strictly positive on $\overline{\mathcal{O}}$, and we suppose that
    \begin{equation}
    \label{inf_normal}
        \inf_{x \in \Sigma \setminus \mathcal{P}} \nu_n(x) =: \rho_1 > 0.
    \end{equation}
    For every $\alpha > 0$, we introduce the sets
    \begin{equation*}
        \Sigma_\alpha := \{(x',x_n) : x' \in \mathcal{O}, x_n = a(x') - \alpha\}
    \end{equation*}
    and
    \begin{equation*}
        F_\alpha := \{(x',x_n) : x' \in \mathcal{O}, a(x') - \alpha < x_n < a(x')\},
    \end{equation*}
    see Figure \ref{fig:sets}.

    \begin{figure}[H]
        \centering
        \pgfdeclarelayer{background layer}
        \pgfdeclarelayer{foreground layer}
        \pgfsetlayers{background layer,main,foreground layer}

        \begin{center}
        \begin{tikzpicture}[scale=1]
        
        \draw[thick,->] (-0.25,0)--(5,0);
        \draw[thick,->] (0,-0.25)--(0,4);

        \draw[smooth,domain=0:5,variable=\x] plot (\x, {sqrt(\x)+1});
        \draw[thick,smooth,domain=1:4,variable=\x] plot (\x, {sqrt(\x)+1});
        \filldraw (1,2) circle (1.5pt);
        \filldraw (4,3) circle (1.5pt);

        \draw[thick,smooth,domain=1:4,variable=\x] plot (\x, {sqrt(\x)});
        \filldraw (1,1) circle (1.5pt);
        \filldraw (4,2) circle (1.5pt);

        \draw[dashed] (1,2)--(1,0);
        \draw[dashed] (4,3)--(4,0);
        \filldraw (1,0) circle (1.5pt);
        \filldraw (4,0) circle (1.5pt);
        
        \begin{pgfonlayer}{foreground layer}
        \draw (-0.3,4) node[scale=0.85] {$x_n$};
        \draw (5,-0.2) node[scale=0.85] {$x'$};
        \draw (2.5,-0.2) node[scale=0.85] {$\mathcal{O}$};

        \draw (3.5,3.1) node[scale=0.85] {$\Sigma$};
        \draw (3.5,1.6) node[scale=0.85] {$\Sigma_\alpha$};
        \draw (2.5,2.1) node[scale=0.85] {$F_\alpha$};
        \end{pgfonlayer}
        \end{tikzpicture}
        \end{center}
        
        \caption{The sets $\Sigma_\alpha$ and $F_\alpha$ for $\alpha>0$ small.}
        \label{fig:sets}
    \end{figure}
    
    Since $\A$ satisfies the rank-one condition and is elliptic, we find by Lemma \ref{lem:directional_spectrum_surjectivity_projections} $v^* \in V^*$ and $w^* \in W^*$ with $|w^*| = 1$ such that 
    \begin{align}
        \langle w^* , \AA(\xi) v \rangle  = \langle e_n , \xi \rangle \langle v^* , v \rangle
    \end{align}
    for every $v \in V$ and $\xi \in \s^{n-1}$. By Proposition \ref{prop:slicing_theory}, we have 
    \[
        (t \mapsto \langle v^*, u(x', t)\rangle)  \in BV_{\rm loc}(\Omega^{e_n}_{x'})
    \]
    for almost every $x' \in \mathcal O$. From this, we infer for every $\alpha \in (0, \alpha_0)$ and a.e. $x' \in \mathcal{O}$ 
    \begin{equation}
    \label{slicing}
        |\langle v^*, u(x',a(x')-\alpha_0) - u(x',a(x')-\alpha) \rangle| \le |\langle v^*, \partial_n u(x',\,\cdot\,) \rangle|(a(x')-\alpha_0, a(x')-\alpha),
    \end{equation}
    where the right-hand side is intended as the total variation of the measure $\langle v^*,  \partial_n u(x', \,\cdot\, ) \rangle$ over an interval. For $\alpha_0$ sufficiently small (namely such that $\Sigma_\alpha, \Sigma_{\alpha_0} \subset \Omega \cap Q$) we apply the co-area formula to infer
    \begin{equation*}
    \begin{aligned}
        &\int_{F_{\alpha_0} \setminus F_\alpha} | \langle v^*,  u(x) \rangle| \, dx \\
        &= \int_\alpha^{\alpha_0} \left( \int_{\Sigma_{\alpha'}} |\langle v^*,  u(x) \rangle| \, \frac{d\H^{n-1}(x)}{\nu_n(x)} \right) \, d\alpha' \\
        &= \int_\alpha^{\alpha_0} \left( \int_\Sigma | \langle v^*, u(x',a(x') - \alpha') \rangle| \, \frac{d\H^{n-1}(x)}{\nu_n(x',a(x'))} \right) \, d\alpha' \\
        &\le \frac{\alpha_0 - \alpha}{\rho_1} \int_{\Sigma_{\alpha_0}} |\langle v^*,  u(x) \rangle| \, d\H^{n-1}(x) + (\alpha_0 - \alpha) \int_\mathcal{O} |\langle v^*, \partial_n u(x',\cdot) \rangle|(a(x')-\alpha_0, a(x')-\alpha) \, dx',
    \end{aligned}
    \end{equation*}
    where in the last line we used \eqref{inf_normal} and \eqref{slicing}. Let now $\widetilde{\Omega} \subset\subset \Omega$ be open and Lipschitz such that $\widetilde{\Omega} \supset \mathcal{O} \times (\alpha_0,\alpha_1) = F_{\alpha_1} \setminus F_{\alpha_0}$ with $\alpha_1 > \alpha_0$ chosen such that $\Sigma_{\alpha_1} \subset \Omega$, see Figure \ref{fig:sets_2}.

    \begin{figure}[H]
        \centering
        \pgfdeclarelayer{background layer}
        \pgfdeclarelayer{foreground layer}
        \pgfsetlayers{background layer,main,foreground layer}

        \begin{center}
        \begin{tikzpicture}[scale=1]
        
        \draw[thick,->] (-0.25,0)--(5,0);
        \draw[thick,->] (0,-0.25)--(0,4);

        \draw[smooth,domain=0:5,variable=\x] plot (\x, {sqrt(\x)+1});
        \draw[thick,smooth,domain=1:4,variable=\x] plot (\x, {sqrt(\x)+1});
        \filldraw (1,2) circle (1.5pt);
        \filldraw (4,3) circle (1.5pt);

        \draw[thick,smooth,domain=1:4,variable=\x] plot (\x, {sqrt(\x)});
        \filldraw (1,1) circle (1.5pt);
        \filldraw (4,2) circle (1.5pt);

        \draw[thick,smooth,domain=1:4,variable=\x] plot (\x, {sqrt(\x)-0.5});
        \filldraw (1,0.5) circle (1.5pt);
        \filldraw (4,1.5) circle (1.5pt);

        \draw[dashed] (1,2)--(1,0);
        \draw[dashed] (4,3)--(4,0);
        \filldraw (1,0) circle (1.5pt);
        \filldraw (4,0) circle (1.5pt);

        \draw[thick,red,smooth,domain=0:5,variable=\x] plot (\x, {sqrt(\x)+0.5});
        
        \begin{pgfonlayer}{foreground layer}
        \draw (-0.3,4) node[scale=0.85] {$x_n$};
        \draw (5,-0.2) node[scale=0.85] {$x'$};
        \draw (2.5,-0.2) node[scale=0.85] {$\mathcal{O}$};

        \draw (1.1,2.3) node[scale=0.85] {$\Sigma$};
        \draw (4.4,1.9) node[scale=0.85] {$\Sigma_{\alpha_0}$};
        \draw (4.4,1.4) node[scale=0.85] {$\Sigma_{\alpha_1}$};
        \end{pgfonlayer}
        \end{tikzpicture}
        \end{center}
        
        \caption{In red the boundary of the set $\widetilde{\Omega}$.}
        \label{fig:sets_2}
    \end{figure}

    Since we know that $u \in L^1_\loc(\Omega; V)$, by the trace theorem for $BV^\A(\widetilde \Omega, V)$ (cf.\ \cite{BDG20}) we derive
    \begin{equation*}
        \int_{\Sigma_{\alpha_0}} |\langle v^*,  u(x) \rangle| \, d\H^{n-1}(x) \le |v^*| \|u\|_{BV^\A(\widetilde{\Omega};V)}.
    \end{equation*}
    Furthermore, estimating the second integral on the right-hand side by applying Proposition \ref{prop:slicing_theory} we infer
    \begin{equation*}
        \int_\mathcal{O} |\langle v^*, \partial_n u(x',\cdot) \rangle|(a(x')-\alpha_0, a(x')-\alpha) \, dx' \le |\A u|(F_{\alpha_0} \setminus F_\alpha) \le |\A u|(\Omega).
    \end{equation*}
    Combining these last two estimates, we obtain
    \begin{equation*}
        \int_{F_{\alpha_0} \setminus F_\alpha} |\langle v^*,  u(x) \rangle| \, dx \le |v^*| \, \frac{\alpha_0 - \alpha}{\rho_1} \|u\|_{BV^\A(\widetilde{\Omega}; V)} + (\alpha_0 - \alpha) |\A u|(\Omega),
    \end{equation*}
    and passing to the limit as $\alpha \to 0$ we derive that $\langle v^*,  u \rangle \in L^1(F_{\alpha_0})$.
    \\[0.5em]
    By Lemma \ref{lem:directional_spectrum_surjectivity_projections}, we have that $(\zeta_i, v_i^*) \in \partial \sigma (A)$ for some $v_i^*$ such that $v^*, v_1^*, .., v_{N-1}^*$ form a basis of $V^*$. By Lemma \ref{lem:polarization}, let $\gamma_i$ be such that 
    \[
        (e_n + \lambda \zeta_i, v^* + \gamma_i \lambda v_i^*) \in \partial \sigma(\A)
    \]
    for every $\lambda > 0$. Notice that for small $\lambda > 0$ there exists a cube $Q_i$ centered in $x_0$ with two side orthogonal to $\nu_i := e_n + \lambda \zeta_i/|e_n + \lambda \zeta_i|$ such that $Q_i \subset Q$ and $\partial \Omega \cap Q_i$ be written as a Lipschitz graph with respect to the basis of $Q_i$. Repeating the same analysis while substituting the pair $(e_n, v^*)$ with $(\nu_i, v_i^*)$ we derive that $\langle v_i^*, u \rangle \in L^1(Q_i \cap \Omega)$, meaning on the intersection $\tilde Q = Q \cap \bigcap_{i = 1}^{N - 1} Q_i$ we have $u \in L^1(\tilde Q)$. By a standard compactness argument, we cover $\partial \Omega$ with finitely many such neighborhoods $Q_i$ to show integrability near the boundary.
\end{proof}


\section*{Acknowledgements}

This research was funded in part by the Austrian Science Fund (FWF) projects \href{https://doi.org/10.55776/F65}{10.55776/F65} and \href{https://doi.org/10.55776/Y1292}{10.55776/Y1292}. For the purpose of open access, the authors have applied a CC BY public copyright license to any Author Accepted Manuscript (AAM) version arising from this submission.


\printbibliography

\end{document}